\renewcommand{\(}{\left\(}
\renewcommand{\)}{\right\)}
\renewcommand{\[}{\left\[}
\renewcommand{\]}{\right\]}
\numberwithin{equation}{section}
\theoremstyle{plain}
\newtheorem{theorem}{Theorem}[section]
\newtheorem{lemma}[theorem]{Lemma}
\newtheorem{corollary}[theorem]{Corollary}
\def\proof{\@ifnextchar[{\@oproof}{\@nproof}}
\def\@oproof[#1][#2]{\trivlist\item[\hskip\labelsep\textit{#2 Proof of\
#1.}~]\ignorespaces}
\def\@nproof{\trivlist\item[\hskip\labelsep\textit{Proof.}~]\ignorespaces}
\begin{document}
\title{A divisor generating $q$-series and cumulants arising from random graphs}

\thanks{$2020$ \textit{Mathematics Subject Classification.} Primary 11P84,  33D15; Secondary 05C80,  60F99.\\
\textit{Keywords and phrases.} $q$-series,  generalized divisor function,  probability distributions,  random graphs.}

\author{Archit Agarwal}
\address{Archit Agarwal, Department of Mathematics, Indian Institute of Technology Indore, Simrol, Indore 453552, Madhya Pradesh, India.}
\email{phd2001241002@iiti.ac.in, archit.agrw@gmail.com}

\author{Subhash Chand Bhoria}
\address{Subhash Chand Bhoria, Pt. Chiranji Lal Sharma Government PG College, Karnal, Urban Estate, Sector-14, Haryana 132001, India.}
\email{scbhoria89@gmail.com}

\author{Pramod Eyyunni}
\address{Pramod Eyyunni,  Department of Mathematics,  Birla Institute of Technology And Science Pilani,  Pilani Campus,  Pilani-333031, Rajasthan, India.}
\email{pramod.eyyunni@pilani.bits-pilani.ac.in,  pramodeyy@gmail.com}

\author{Bibekananda Maji}
\address{Bibekananda Maji, Department of Mathematics, Indian Institute of Technology Indore, Simrol, Indore 453552, Madhya Pradesh, India.}
\email{bmaji@iiti.ac.in, bibek10iitb@gmail.com}

\author{Tanay Wakhare}
\address{Tanay Wakhare,   Department of Electrical Engineering and Computer Science, MIT, Cambridge, MA 02139, United States of America.}
\email{twakhare@mit.edu}

\maketitle

\begin{center}
{Dedicated to George Andrews and Bruce Berndt for their 85th birthdays}
\end{center}

\begin{abstract}

Uchimura, in 1987,  introduced a probability generating function for a random variable $X$ and using properties of this function he discovered an interesting $q$-series identity. 
 He further showed that the $m$-th cumulant  with respect to the random variable $X$ is nothing but  the  generating function for the generalized divisor function $\sigma_{m-1}(n)$.
 Simon, Crippa, and Collenberg, in 1993, explored the $G_{n,p}$-model of a random acyclic digraph and defined a random variable $\gamma_n^{*}(1)$.  Quite interestingly,  they  found links between limit of its mean and the generating function for the divisor function $d(n)$.    Later in 1997,  Andrews, Crippa and Simon extended these results using $q$-series techniques. They calculated limit of the mean and variance of the random variable $\gamma_n^{*}(1)$  which correspond to the first and second cumulants. 
 In this paper,  we generalize the result of Andrews,  Crippa and Simon by calculating limit of the $t$-th cumulant in terms of the generalized divisor function.  Furthermore,  we also discover limit forms for identities of Uchimura and Dilcher.  This provides a fourth side to the Uchimura-Ramanujan-divisor type three way partition identities expounded by the first four authors recently.

\end{abstract}

\section{Introduction}
Let $d(n)$ and $\sigma_{m}(n)$ be the well-known divisor functions and their generating functions are given by,  $$ \sum_{n=1}^\infty d(n) q^n= \sum_{n=1}^\infty\frac{q^n}{1-q^n},  \quad  \sum_{n=1}^\infty \sigma_{m}(n) q^n = \sum_{n=1}^\infty \frac{n^m q^n}{1-q^n},   ~~\textrm{for}~~m \in \mathbb{Z}, ~ |q|<1.$$
In 1919,  Kluyver \cite{kluyver} showed that 
\begin{equation}\label{Kluyver}
\sum_{n=1}^{\infty} \frac{(-1)^{n-1} q^{\frac{n(n+1)}{2}}}{(1-q^n) ( q)_n  } = \sum_{n=1}^{\infty} \frac{ q^n }{1-q^n}.
\end{equation}
A one variable generalization of the identity \eqref{Kluyver} can be found in the second notebook of Ramanujan \cite[p.~354]{ramanujanoriginalnotebook2}.  
Uchimura \cite{uchimura81}, in 1981,  gave a new expression for Kluyver's identity.  Mainly,  he proved that
\begin{align}\label{uchimura's identity}
\sum_{n=1}^\infty nq^n(q^{n+1})_\infty=\sum_{n=1}^\infty \frac{(-1)^{n-1} q^{ \frac{n(n+1)}{2}}}{(1-q^n)(q)_n}=\sum_{n=1}^\infty\frac{q^n}{1-q^n}.
\end{align}
A beautiful partition theoretic interpretation of this identity was obtained by Bressoud and Subbarao \cite{bresub},  which has been further explored by the first four authors in \cite{ABEM}.  Over the period of time,  Uchimura's identity \eqref{uchimura's identity} has been extended by many mathematicians.  Among them,  Uchimura himself in \cite{uchimura87},     Dilcher \cite{dilcher},   Andrews-Crippa-Simon \cite{andrewssiam1997},  and  Gupta-Kumar \cite{GK21} worked on this identity.    Recently,   the first four authors \cite{ABEM24} studied these generalizations and presented a unified theory.

Now,  we shall discuss the generalizations given by Uchimura \cite{uchimura87} and Andrews-Crippa-Simon \cite{andrewssiam1997} and their applications in probability theory and random graphs.    

Uchimura \cite[p.~76]{uchimura87} considered a random variable $X$ with the probability generating function 
\begin{align*}
G(x,q)=\sum_{n=0}^\infty x^n~ \mathrm{Pr}(X=n),
\end{align*}
where $ \mathrm{Pr}(X=n)=q^n(q^{n+1})_\infty$ for any non-negative integer $n$ and $q \in (0,1)$. 
Moreover, he proved that,   for any natural number $m$, the $m$-th moment is
\begin{align}\label{Uchimura's gen}
\mathbb{E}(X^m)= M_m=Y_m(K_1,K_2,\dots,K_m),  
\end{align}
where
\begin{align}\label{define M_m and K_m}
M_m := \sum_{n=1}^{\infty} n^m q^n (q^{n+1})_{\infty}, \quad \text{and} \quad K_{m} := \sum_{n=1}^{\infty} \sigma_{m-1}(n) q^n, 
\end{align} 
and $Y_m$ is the Bell polynomial defined by
\begin{equation}\label{define Bell polynomial}
Y_m \left( u_1, u_2, \dots, u_m\right) = \sum_{\Pi(m)} \frac{m!}{k_1 ! \dots k_m !} \left( \frac{u_1}{1!} \right)^{k_1} \dots \left( \frac{u_m}{m!} \right)^{k_m},
\end{equation}
where $\Pi(m)$ denotes a partition of $m$ with
\begin{equation*}
k_1 + 2 k_2 + \cdots + mk_m = m.  
\end{equation*}
He further showed that the $m$-th cumulant $h_m$ is equal to the divisor generating function $K_m$,  that is,  
\begin{align}\label{mth cumulant}
h_m=K_m,  \quad \forall~m \in \mathbb{N},  
\end{align}
 by writing the probability generating function in the following way
\begin{align*}
G(e^t,q)=\exp\left( \sum_{m=1}^\infty h_m \frac{t^m}{m!} \right).  
\end{align*}
The above identity \eqref{mth cumulant} proves that the divisor generating function $K_m$ is nothing but the $m$-th cumulant with respect to the random variable $X$.  As we know that the second cumulant $h_2$ is same as $\mathrm{Var}(X)$,  so we have
\begin{align}\label{variance by uchimura}
\mathrm{Var}(X)=\sum_{n=1}^{\infty} \sigma(n) q^n.
\end{align}
Uchimura \cite{uchimura87} provided a combinatorial interpretation of the probability generating function $G(x,q)$ by highlighting its importance through applications in the analysis of data structures, called heaps. 
His work is primarily focused on examining the average number of exchanges needed to insert an element into a heap, offering insights into the efficiency of this operation. For readers interested in a detailed exposition, further discussion can be traced in \cite[Section~3]{uchimura87}.

In 1993, Simon, Crippa, and Collenberg \cite{SCC1993} analyzed how the transitive closure is distributed in the $G_{n,p} $--model of a random acyclic digraph. They defined the $G_{n,p}$--model as a random acyclic digraph with a vertex set $ V = \{1, 2, \dots, n\} $ and directed edges appearing between vertices $ (i, j) $, for $ 1 \leq i \leq j \leq n $, with probability $ p \in (0, 1) $. Let $ \gamma^*_n(1) $ be a random variable representing the number of vertices reachable from vertex $1$ by a directed path, including vertex $1$ itself. They initially established a probability function, for $1 \leq h \leq n$,
\begin{align}\label{probability function define by Simon, Crippa, and Collenberg}
\mathrm{Pr}(\gamma^*_n(1)=h)=q^{n-h}\prod_{j=1}^{h-1}(1-q^{n-j}),
\end{align}
where $q=1-p$. In the same paper, by treating the random variable representing the size of a node's transitive closure as a discrete-time pure-birth process, they derived an expression for its distribution, mean and variance, linking these to the divisor generating function. They proved that,   
\begin{align}\label{expectation formula given by Simon, Crippa, and Collenberg}
\lim_{n\rightarrow \infty}(n-\mathbb{E}(\gamma^*_n(1))=\sum_{n=1}^\infty d(n) q^n.  
\end{align}
 Andrews, Crippa and Simon \cite{andrewssiam1997} further studied the same random variable and   proved that
\begin{align}\label{variance by andrews crippa simon}
\lim_{n\rightarrow \infty} \mathrm{Var}(\gamma^*_n(1))=\sum_{n=1}^\infty \sigma(n)q^n.  
\end{align}
In view of \eqref{variance by uchimura} and \eqref{variance by andrews crippa simon},  we can clearly see that 
\begin{align*}
\mathrm{Var}(X)=\lim_{n\rightarrow \infty} \mathrm{Var}(\gamma^*_n(1)) = \sum_{n=1}^\infty \sigma(n)q^n =K_2,  
\end{align*}
where $X$ is the random variable studied by Uchimura.   As mentioned earlier,  Uchimura also showed that  the $m$th cumulant $h_m$ with respect to the random variable $X$ is exactly equal to the generating function  for the  generalized divisor function $\sigma_{m-1}(n)$. 

This observation led the first four authors \cite[p.~31,  Problem 2]{ABEM24} to raise the question on the interpretation of the generalized divisor generating function $K_m$,  defined in \eqref{define M_m and K_m},   with respect to the random variable $\gamma^*_n(1)$ studied by Simon, Crippa and Collenberg.   In this paper, we  completely settle this question (see Theorem \ref{Conjecture}).  


It is worthwhile to indicate that,  Andrews, Crippa and Simon \cite{andrewssiam1997} established a more general form of \eqref{expectation formula given by Simon, Crippa, and Collenberg} using the theory of $q$-series.   For that, they first defined a sequence $\{a_n(q)\}$ of polynomials in $q$, which obeys the following recurrence relation, for $n\geq 1$, 
\begin{align}\label{sequence define by andrews crippa simon}
a_n(q)=f(n)+(1-q^{n-1})a_{n-1}(q),\hspace{5mm}a_0(q)=0,
\end{align}
where $f(n)=\sum_{k\geq 0}c_kn^k$ is a non-zero polynomial in $n$ with rational coefficients. Then they showed the existence of rational coefficients $h_j$,  such that
\begin{align}\label{limit formula given by andrews crippa simon}
\lim_{n\rightarrow \infty} \left( \sum_{i=1}^n f(i)-a_n(q) \right) = \sum_{j\geq 1} h_jP_j(q),
\end{align}
where $P_j(q):=P_j(K_1(q),  K_2(q),\dots,K_{j}(q))$ is some polynomial with rational coefficients,
and the coefficients $h_j$ can be evaluated by 
\begin{align*}
h_1=c_0,\hspace{5mm} h_j=\sum_{i\geq j-1}(-1)^{i-j+1} {i-1 \choose j-2} i! \sum_{k\geq i}c_k \bar{s}(k,i),
\end{align*}
where $\bar{s}(k,i)$ is the Stirling number of the second kind. Further, they observed that the expected value of $\gamma^*_n(1)$, denoted as $\mathbb{E}(\gamma^*_n(1))$, satisfies the recurrence relation given in \eqref{sequence define by andrews crippa simon} with $f(n) = 1$ for all natural numbers $n$. Using \eqref{limit formula given by andrews crippa simon}, they derived \eqref{expectation formula given by Simon, Crippa, and Collenberg} and \eqref{variance by andrews crippa simon}.

The paper is organized as follows.  In Section \ref{Main Results},  we state main results of our paper.  
Next in Section \ref{Preliminary},  we provide some preliminary results along with a crucial lemma which is used in the proof of Theorem \ref{a new q-series identity}.  This theorem leads to a new term for the identity \eqref{limit formula given by andrews crippa simon} established by Andrews, Crippa, and Simon,  see Theorem \ref{new expression to Andrews Crippa Simon}.  Section \ref{proofs} is devoted to the proof of the main results,  which include a new limit form for Uchimura's generalization \eqref{Uchimura's gen} as well as Dilcher's generalization \eqref{dilcher 1} of Uchimura's identity \eqref{uchimura's identity}.    
We prove a general result for the limit of the $t$-th cumulant in terms of the generalized divisor function in Section \ref{general case}. 
Finally,  in Section \ref{remarks},  we finish our paper by giving some concluding remarks.




\section{Main Results}\label{Main Results}
Before stating our main results,  we state a result  \cite[Corollary 2.14]{ABEM24} obtained by the first four authors recently,  which gives a Ramanujan-type expression for Uchimura's idenity \eqref{Uchimura's gen},  namely,   for any $k \in \mathbb{N}\cup \{0\}$,  we have 
\begin{align}\label{Ramanujan-type for Uchimura}
\sum_{n=1}^\infty n^k q^n(q^{n+1})_\infty =  \sum_{n=1}^\infty \frac{(-1)^{n-1} q^{ {n+1 \choose 2 } } A_k(q^n)}{(1-q^n)^k (q)_n}, 
\end{align}
where $A_k(x)$ denotes the Eulerian polynomial of degree $k$ defined by the following generating function
\begin{align*}
\sum_{k=0}^\infty A_{k}(x) \frac{t^k}{k!} = \frac{(1-x) e^t }{e^{xt} - x e^t}.  
\end{align*}
Now we are ready to state the main results of our paper.  Motivated from identity \eqref{Ramanujan-type for Uchimura},  we obtain the identity below.    
\begin{theorem} \label{a new q-series identity}
For any non-negative integer $k$, we have the following expression
\begin{align*}
\sum_{n=1}^\infty \left(\sum_{m=1}^n m^k\right) q^n(q^{n+1})_\infty = \sum_{n=1}^\infty \frac{(-1)^{n-1} q^{{n+1 \choose 2}}A_k(q^n)}{(1-q^n)^{k+1} (q)_n}.
\end{align*}
\end{theorem}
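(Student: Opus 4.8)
The plan is to turn the inner partial sum $\sum_{m=1}^{n} m^{k}$ on the left into the extra factor $(1-q^{n})^{-1}$ on the right by first telescoping the $q$-Pochhammer tails and then expanding with Euler's $q$-exponential identity, reducing everything to the Eulerian-polynomial form of the power-sum generating function. Throughout one works with $|q|<1$, where every series occurring is absolutely convergent: $(q^{n+1})_{\infty}$ is bounded, $(q)_{j}$ is bounded away from $0$, and $\sum_{n}n^{k+1}|q|^{n}<\infty$, so all interchanges of summation below are legitimate.

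First I would record the elementary identity
\begin{align*}
q^{n}(q^{n+1})_{\infty}=(q^{n+1})_{\infty}-(q^{n})_{\infty},
\end{align*}
immediate from $(q^{n})_{\infty}=(1-q^{n})(q^{n+1})_{\infty}$. Writing out $\sum_{m=1}^{n}m^{k}$ and exchanging the order of summation, the left-hand side of the theorem becomes
\begin{align*}
\sum_{n=1}^{\infty}\Big(\sum_{m=1}^{n}m^{k}\Big)q^{n}(q^{n+1})_{\infty}
=\sum_{m=1}^{\infty}m^{k}\sum_{n=m}^{\infty}\big[(q^{n+1})_{\infty}-(q^{n})_{\infty}\big]
=\sum_{m=1}^{\infty}m^{k}\big(1-(q^{m})_{\infty}\big),
\end{align*}
the inner sum telescoping because $(q^{N+1})_{\infty}\to 1$ as $N\to\infty$.

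Next I would expand $(q^{m})_{\infty}$ by Euler's identity $(z;q)_{\infty}=\sum_{j\ge 0}\frac{(-1)^{j}q^{\binom{j}{2}}}{(q)_{j}}z^{j}$ at $z=q^{m}$, which gives $1-(q^{m})_{\infty}=\sum_{j\ge 1}\frac{(-1)^{j-1}q^{\binom{j}{2}+mj}}{(q)_{j}}$. Substituting this and interchanging the $m$- and $j$-sums yields
\begin{align*}
\sum_{m=1}^{\infty}m^{k}\big(1-(q^{m})_{\infty}\big)
=\sum_{j=1}^{\infty}\frac{(-1)^{j-1}q^{\binom{j}{2}}}{(q)_{j}}\sum_{m=1}^{\infty}m^{k}q^{mj}.
\end{align*}
Now I would invoke the classical evaluation $\sum_{m=1}^{\infty}m^{k}x^{m}=\frac{x\,A_{k}(x)}{(1-x)^{k+1}}$, valid for all $k\ge 0$ under the normalization of $A_{k}$ fixed in the statement preceding the theorem (this is the same fact that underlies \eqref{Ramanujan-type for Uchimura}), applied with $x=q^{j}$. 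Since $\binom{j}{2}+j=\binom{j+1}{2}$, the right-hand side collapses to
\begin{align*}
\sum_{j=1}^{\infty}\frac{(-1)^{j-1}q^{\binom{j+1}{2}}A_{k}(q^{j})}{(1-q^{j})^{k+1}(q)_{j}},
\end{align*}
which is precisely the asserted expression after renaming $j$ as $n$.

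The argument is essentially bookkeeping, and I do not expect a substantive obstacle. The two points deserving a line of care are the justification of the interchanges of summation (absolute convergence for $|q|<1$) and the verification that the power-sum/Eulerian-polynomial identity holds uniformly in $k$, including the degenerate case $k=0$ (where it reads $\sum_{m\ge1}x^{m}=x/(1-x)$ with $A_{0}=1$), under the generating-function normalization used in the paper. Conceptually the mechanism is transparent: averaging $m^{k}$ over $1\le m\le n$ on the left corresponds on the right to a single extra geometric factor $\sum_{j\ge 0}q^{nj}=(1-q^{n})^{-1}$, and the telescoping together with Euler's identity is exactly what makes this correspondence explicit.
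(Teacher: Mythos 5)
Your proof is correct, but it follows a genuinely different route from the paper's. The paper never telescopes: it splits $\sum_{m=1}^{n}m^{k}=\sum_{m=1}^{n-1}m^{k}+n^{k}$, converts the partial power sum via Bernoulli's formula \eqref{sum of k-th power}, applies the previously established Ramanujan-type identity \eqref{Ramanujan-type for Uchimura} termwise to each resulting series, and then needs Lemma \ref{relation between B_n and A_n(t)} (the identity $S_k(t)=tA_k(t)$ mixing Bernoulli numbers with Eulerian polynomials) to recombine everything into a single Eulerian factor. You instead use the elementary telescoping $q^{n}(q^{n+1})_{\infty}=(q^{n+1})_{\infty}-(q^{n})_{\infty}$ to reduce the left side to $\sum_{m\ge1}m^{k}\bigl(1-(q^{m})_{\infty}\bigr)$, expand $(q^{m})_{\infty}$ by Euler's $q$-exponential theorem, and finish with the single classical evaluation \eqref{eulerian polynomial}; your steps and the convergence justifications all check out, including the boundary case $k=0$. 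What your argument buys is self-containment and brevity: it bypasses both the external input \eqref{Ramanujan-type for Uchimura} from the earlier paper and the Bernoulli--Eulerian lemma entirely, and the mechanism (the partial sum on the left trading for the extra factor $(1-q^{n})^{-1}$ on the right) is conceptually transparent. What the paper's route buys is that it exhibits the theorem as a direct companion of \eqref{Ramanujan-type for Uchimura} within the authors' unified framework and isolates Lemma \ref{relation between B_n and A_n(t)}, a Bernoulli--Eulerian relation of independent interest that your approach never touches.
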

Utilizing this result,  we obtain a new expression for the identity \eqref{limit formula given by andrews crippa simon} of Andrews, Crippa and Simon.

\begin{theorem}\label{new expression to Andrews Crippa Simon}
Let $a_n(q)$ be a polynomial in $q$ defined by the recurrence relation $$a_0(q)=1, \hspace{5mm}a_n(q)=f(n)+(1-q^{n-1})a_{n-1}(q),\hspace{5mm} \mathrm{for} \hspace{5mm} n\geq 1,$$ where $f(n)=\sum_{k\geq 0} c_k n^k$ is a polynomial in $n$. Then
\begin{align*}
\lim_{n\rightarrow \infty}\left(\sum_{i=1}^n f(i)-a_n(q)\right) = \sum_{n=1}^\infty \left(\sum_{i=1}^n f(i)\right)q^n(q^{n+1})_\infty.
\end{align*}
\end{theorem}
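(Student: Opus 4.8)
The plan is to solve the recurrence in closed form and then to recognize the claimed right-hand side as the limit of a rearrangement of that solution, obtained via a telescoping identity together with summation by parts. Write $S_n := \sum_{i=1}^n f(i)$, so that $S_n - S_{n-1} = f(n)$ and $S_0 = 0$. A straightforward induction on $n$ shows that, for $n \ge 1$,
\begin{equation*}
a_n(q) \;=\; \sum_{j=1}^n f(j)\prod_{i=j}^{n-1}(1-q^i) \;=\; \sum_{j=1}^n f(j)\,(q^j;q)_{n-j},
\end{equation*}
with the empty product read as $1$; the base case $n=1$ uses $1-q^{0}=0$, so the initial value $a_0(q)=1$ is immaterial. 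Consequently
\begin{equation*}
S_n - a_n(q) \;=\; \sum_{j=1}^{n} f(j)\bigl(1-(q^j;q)_{n-j}\bigr) \;=\; \sum_{j=1}^{n-1} f(j)\bigl(1-(q^j;q)_{n-j}\bigr),
\end{equation*}
since the $j=n$ summand vanishes.

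On the series side I would use the elementary telescoping identity $q^n(q^{n+1})_\infty = (q^{n+1})_\infty - (q^n)_\infty$, which follows from $(q^n)_\infty = (1-q^n)(q^{n+1})_\infty$. Then the $N$-th partial sum of the target series equals $\sum_{n=1}^N S_n\bigl((q^{n+1})_\infty - (q^n)_\infty\bigr)$, and summation by parts (using $S_n - S_{n-1}=f(n)$) collapses it to
\begin{equation*}
\sum_{n=1}^N S_n\, q^n(q^{n+1})_\infty \;=\; S_N\,(q^{N+1})_\infty - \sum_{m=1}^N f(m)\,(q^m)_\infty \;=\; \sum_{m=1}^N f(m)\bigl((q^{N+1})_\infty - (q^m)_\infty\bigr).
\end{equation*}
Factoring $(q^m)_\infty = (q^m;q)_{N+1-m}\,(q^{N+1})_\infty$ out of each term then produces the exact finite identity
\begin{equation*}
\sum_{n=1}^N S_n\, q^n(q^{n+1})_\infty \;=\; (q^{N+1})_\infty \sum_{m=1}^{N} f(m)\bigl(1-(q^m;q)_{N+1-m}\bigr) \;=\; (q^{N+1})_\infty\bigl(S_{N+1}-a_{N+1}(q)\bigr),
\end{equation*}
the last equality being the closed form above evaluated at index $N+1$.

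Finally, since $f$ is a polynomial, $|S_n|$ grows at most polynomially, so $\sum_{n\ge 1} S_n q^n(q^{n+1})_\infty$ converges absolutely for $|q|<1$, while $(q^{N+1})_\infty \to 1$ (and is nonzero) as $N \to \infty$. Dividing the displayed finite identity by $(q^{N+1})_\infty$ and letting $N\to\infty$ then shows that $S_{N+1}-a_{N+1}(q)$ converges, with limit $\sum_{n=1}^\infty S_n q^n(q^{n+1})_\infty$, which is exactly the assertion. The computation is essentially algebraic, so I expect no real analytic obstacle; the one point needing care is the index bookkeeping in the middle step, namely matching the partial $q$-product $(q^m;q)_{N+1-m}$ produced by summation by parts with the one coming from the unrolled recurrence at level $N+1$. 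One could instead reduce to the monomials $f(n)=n^k$ by linearity and invoke Theorem \ref{a new q-series identity}, but the direct argument above is cleaner and self-contained.
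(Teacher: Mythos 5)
Your argument is correct, and it is a genuinely different and more elementary route than the paper's. You first unroll the recurrence into the closed form $a_n(q)=\sum_{j=1}^n f(j)\prod_{i=j}^{n-1}(1-q^i)$ (correctly noting that $1-q^0=0$ makes $a_0(q)$ irrelevant), and then prove the \emph{exact finite identity}
\begin{align*}
\sum_{n=1}^N \Big(\sum_{i=1}^n f(i)\Big) q^n(q^{n+1})_\infty \;=\; (q^{N+1})_\infty\Big(\sum_{i=1}^{N+1}f(i)-a_{N+1}(q)\Big),
\end{align*}
via the telescoping relation $q^n(q^{n+1})_\infty=(q^{n+1})_\infty-(q^n)_\infty$ and Abel summation; the index bookkeeping you flag does check out, since $\prod_{i=m}^{N}(1-q^i)\,(q^{N+1})_\infty=(q^m)_\infty$ matches the unrolled form at level $N+1$. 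Letting $N\to\infty$ with $(q^{N+1})_\infty\to 1$ then gives the theorem. The paper instead works with the generating function $A(x)=\sum_{n\ge1}a_n(q)x^n$, derives the functional equation $A(x)=F(x)/(1-x)-xA(qx)/(1-x)$, iterates it, specializes at $x=q$, and invokes Theorem \ref{a new q-series identity} (hence Lemma \ref{relation between B_n and A_n(t)} on Bernoulli numbers and Eulerian polynomials, and the Ramanujan-type identity \eqref{Ramanujan-type for Uchimura}) before telescoping the recurrence. Your proof buys self-containedness, an explicit finite-$N$ identity with error factor $(q^{N+1})_\infty$, and validity for any $f$ of subexponential growth (polynomiality is used only for absolute convergence of the series), whereas the paper's route additionally produces the Eulerian-polynomial representation of $A(q)$, which is what links this theorem to identity \eqref{Ramanujan-type for Uchimura} and the ``fourth side'' narrative; note also that your closed form for $a_n(q)$ is exactly the expression \eqref{defn of a_{n,k}} the paper uses later, so the two treatments mesh well. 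The only point worth making explicit in a final write-up is the uniform bound $\left|(q^{n+1})_\infty\right|\le\prod_{i\ge1}(1+|q|^i)$ justifying absolute convergence, and the fact that $(q^{N+1})_\infty\neq 0$ for $|q|<1$; both are immediate.
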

We present two applications of this theorem.  First, we give  a limit form for Uchimura's function $M_k$,  defined in  \eqref{define M_m and K_m} and the second application presents a limit form for Dilcher's identity \eqref{dilcher 1}.  
\begin{corollary}\label{a new term of limit to our generalization}
Let $k$ be a non-negative integer and $a_{n,k}(q)$ be a sequence of polynomials in $q$ defined by the recurrence relation 
\begin{align}\label{definition a_n,k}
a_{0,k}(q)=1, \hspace{5mm}a_{n,k}(q)=f_k(n)+(1-q^{n-1})a_{n-1,k}(q),\hspace{5mm} \mathrm{for} \hspace{5mm} n\geq 1,
\end{align}
where $$ f_k(n)=\sum_{j=1}^k {k \choose j}(-1)^{j-1}n^{k-j}. $$ Then
\begin{align*}
\lim_{n\rightarrow \infty}\left(n^k- a_{n,k}(q)\right) = \sum_{n=1}^\infty n^k q^n(q^{n+1})_\infty.
\end{align*}
\end{corollary}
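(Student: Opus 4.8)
The plan is to derive Corollary \ref{a new term of limit to our generalization} as a direct consequence of Theorem \ref{new expression to Andrews Crippa Simon}. The key observation is that the recurrence \eqref{definition a_n,k} for $a_{n,k}(q)$ is exactly the recurrence from Theorem \ref{new expression to Andrews Crippa Simon} with the particular choice $f(n) = f_k(n) = \sum_{j=1}^k \binom{k}{j}(-1)^{j-1} n^{k-j}$. So the entire content reduces to identifying $\sum_{i=1}^n f_k(i)$ and checking it telescopes to $n^k$.

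First I would compute the partial sum $\sum_{i=1}^n f_k(i)$. Writing $f_k(i) = \sum_{j=1}^k \binom{k}{j}(-1)^{j-1} i^{k-j}$, I note that by the binomial theorem $f_k(i) = i^k - \sum_{j=0}^k \binom{k}{j}(-1)^j i^{k-j} = i^k - (i-1)^k$. Hence $f_k$ is a telescoping difference, and $\sum_{i=1}^n f_k(i) = \sum_{i=1}^n \left( i^k - (i-1)^k \right) = n^k - 0^k = n^k$ (using $0^k = 0$ since $k \geq 1$; the case $k = 0$ must be checked separately — there $f_0(n)$ is an empty sum, so $f_0 \equiv 0$, $\sum_{i=1}^n f_0(i) = 0 = n^0 - 1$ adjusting for the $a_{0,0}(q) = 1$ initial value, and the right-hand side $\sum_n q^n (q^{n+1})_\infty$ should be interpreted accordingly; I would either treat $k=0$ as trivial or restrict to $k \geq 1$).

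Next I would apply Theorem \ref{new expression to Andrews Crippa Simon} with $f = f_k$. Since $f_k(n) = \sum_{k' \geq 0} c_{k'} n^{k'}$ is indeed a polynomial in $n$ (with the $c_{k'}$ read off as $c_{k-j} = \binom{k}{j}(-1)^{j-1}$ for $1 \le j \le k$ and $c_{k'} = 0$ otherwise), the hypotheses of the theorem are met, and the conclusion gives
\begin{align*}
\lim_{n\to\infty} \left( \sum_{i=1}^n f_k(i) - a_{n,k}(q) \right) = \sum_{n=1}^\infty \left( \sum_{i=1}^n f_k(i) \right) q^n (q^{n+1})_\infty.
\end{align*}
Substituting $\sum_{i=1}^n f_k(i) = n^k$ on both sides yields the claimed identity
\begin{align*}
\lim_{n\to\infty} \left( n^k - a_{n,k}(q) \right) = \sum_{n=1}^\infty n^k q^n (q^{n+1})_\infty.
\end{align*}

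There is no real obstacle here beyond the telescoping identity $f_k(i) = i^k - (i-1)^k$, which is a one-line binomial computation; the only point requiring care is the boundary behavior at $k = 0$ (empty sum conventions and the matching of the initial condition $a_{0,k}(q) = 1$ with the value of $\sum_{i=1}^0 f_k(i)$), which I would dispatch with a brief remark. All the analytic weight — convergence of the limit and the identification with the $q$-series — is already carried by Theorem \ref{new expression to Andrews Crippa Simon}.
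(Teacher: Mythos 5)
Your proof is correct and follows essentially the same route as the paper: recognize via the binomial theorem that $f_k(i) = i^k - (i-1)^k$, telescope $\sum_{i=1}^n f_k(i) = n^k$, and invoke Theorem \ref{new expression to Andrews Crippa Simon}. Your extra remark about the $k=0$ boundary case is a reasonable added precaution but does not change the argument.
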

Now we state  an interesting generalization of Uchimura's  identity \eqref{uchimura's identity} due to Dilcher \cite[Equations (4.3),(5.7)]{dilcher}.  For for $k\in \mathbb{N}$,
\begin{align}\label{dilcher 1}
\sum_{n=k}^\infty {n\choose k} q^n(q^{n+1})_\infty=q^{-{k\choose 2}}\sum_{n=1}^\infty \frac{(-1)^{n-1}q^{{n+k \choose 2}}}{(1-q^n)^k (q)_n}=\sum_{j_1=1}^\infty\frac{q^{j_1}}{1-q^{j_1}} \cdots \sum_{j_k=1}^{j_{k-1}}\frac{q^{j_k}}{1-q^{j_k}}.
\end{align}
As an application of Theorem \ref{new expression to Andrews Crippa Simon},  we obtain a limit expression for Dilcher's identity \eqref{dilcher 1}.  
\begin{corollary}\label{Limiting exp for Dilcher}
Let $k$ be a non-negative integer and $a_{n,k}(q)$ be a sequence of polynomials in $q$ defined by the recurrence relation
\begin{align}\label{a_n,k for Dilcher}
a_{0,k}(q)=1,   a_{n,k}(q) = f_k(n) +  (1- q^{n-1}) a_{n-1,k}(q), ~~ \textrm{for}~~ n \geq 1,  
\end{align}
where 
\begin{align}\label{f_k(n) for Dilcher}
f_k(n) = {n-1 \choose k-1}. 
\end{align}
Then we have
\begin{align}\label{limit form_Dilcher}
\lim_{n \rightarrow \infty} \left(  {n \choose k} - a_{n, k}(q)  \right)  = \sum_{n=k}^\infty {n\choose k} q^n(q^{n+1})_\infty.   
\end{align}

\end{corollary}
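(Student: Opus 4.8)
The plan is to recognize \eqref{a_n,k for Dilcher} as a particular instance of the recurrence treated in Theorem \ref{new expression to Andrews Crippa Simon} and then put the resulting partial sums of $f_k$ into closed form. First I would check that the hypothesis of that theorem is met: for $k\ge 1$,
\begin{equation*}
f_k(n)=\binom{n-1}{k-1}=\frac{(n-1)(n-2)\cdots(n-k+1)}{(k-1)!}
\end{equation*}
is a polynomial in $n$ of degree $k-1$ with rational coefficients, and the recurrence \eqref{a_n,k for Dilcher} together with the initial value $a_{0,k}(q)=1$ is literally the recurrence of Theorem \ref{new expression to Andrews Crippa Simon} with $f=f_k$. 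Hence that theorem applies and gives
\begin{equation*}
\lim_{n\to\infty}\left(\sum_{i=1}^n f_k(i)-a_{n,k}(q)\right)=\sum_{n=1}^\infty\left(\sum_{i=1}^n f_k(i)\right)q^n(q^{n+1})_\infty .
\end{equation*}

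Next I would evaluate the inner sum. By Pascal's rule, $\binom{i}{k}-\binom{i-1}{k}=\binom{i-1}{k-1}=f_k(i)$, so the sum telescopes and, for $k\ge 1$,
\begin{equation*}
\sum_{i=1}^n f_k(i)=\binom{n}{k}-\binom{0}{k}=\binom{n}{k}.
\end{equation*}
Substituting this into the previous display, the left-hand side becomes $\lim_{n\to\infty}\bigl(\binom{n}{k}-a_{n,k}(q)\bigr)$ and the right-hand side becomes $\sum_{n=1}^\infty\binom{n}{k}q^n(q^{n+1})_\infty$; since $\binom{n}{k}=0$ for $0\le n\le k-1$, the latter equals $\sum_{n=k}^\infty\binom{n}{k}q^n(q^{n+1})_\infty$, which is exactly \eqref{limit form_Dilcher}. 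For the degenerate value $k=0$ one has $f_0\equiv 0$, so $a_{n,0}(q)=\prod_{j=0}^{n-1}(1-q^j)=0$ for $n\ge 1$ and $\lim_{n\to\infty}\bigl(\binom{n}{0}-a_{n,0}(q)\bigr)=1$; since the right-hand side of \eqref{limit form_Dilcher} then reads $\sum_{n=0}^\infty q^n(q^{n+1})_\infty$, which equals $1$ by Euler's identity $\sum_{n\ge 0}q^n/(q)_n=1/(q)_\infty$, the claim holds in this case too.

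There is no serious obstacle: once Theorem \ref{new expression to Andrews Crippa Simon} is available, the proof reduces to the observation that $f_k$ is a genuine polynomial in $n$ and to the hockey-stick summation $\sum_{i=1}^n\binom{i-1}{k-1}=\binom{n}{k}$, after which one recognizes the right-hand side as the Dilcher-type series in \eqref{dilcher 1}. The only points requiring attention are minor: the bookkeeping with the lower summation limit (writing $n=k$ rather than $n=1$ is immaterial because the binomial coefficients vanish) and the trivial edge case $k=0$.
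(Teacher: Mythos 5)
Your proposal is correct and follows essentially the same route as the paper: apply Theorem \ref{new expression to Andrews Crippa Simon} with $f=f_k$ and evaluate $\sum_{i=1}^n \binom{i-1}{k-1}=\binom{n}{k}$ by the hockey-stick (telescoping Pascal) identity. Your extra checks --- that $f_k$ is indeed a polynomial in $n$ and the separate treatment of the degenerate case $k=0$ via $\sum_{n\ge 0}q^n(q^{n+1})_\infty=1$ --- are sound refinements of the paper's (more terse) argument, not a different method.
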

Before proceeding to the next result,  let $\gamma_n^*(1)$ be the random variable, studied by Simon-Crippa-Collenberg, as defined just above equation \eqref{probability function define by Simon, Crippa, and Collenberg} in the introduction.     For a random variable $X$,  it is well-known that the cumulant generating function is given by 
\begin{align}\label{cumulant generating function}
\log\left( \mathbb{E}[e^{u X}] \right) = \sum_{t=1}^\infty \kappa_t(X) \frac{u^t}{t!},
\end{align}
where $\kappa_t(X)$ is the $t$-th cumulant with respect to the random variable $X$.  

Simon et al. \cite[p.~7,  Equation (18)]{SCC1993} and later Andrews et al.  \cite[p.~52,  Equation (36)]{andrewssiam1997} proved that
\begin{align}
\lim_{n\rightarrow \infty}(n- \kappa_1(\gamma_n^*(1)))&=\lim_{n\rightarrow \infty}(n-\mathbb{E}(\gamma^*_n(1))=\sum_{n=1}^\infty d(n) q^n=K_1(q),\label{kappa_1} \\
\lim_{n\rightarrow \infty}\kappa_2(\gamma_n^*(1))&=\lim_{n\rightarrow \infty} \mathrm{Var}(\gamma^*_n(1))=\sum_{n=1}^\infty \sigma(n)q^n=K_2(q). \label{kappa_2}
\end{align}
Here in this paper, we first calculate the limit of thrid, fourth and fifth cumulant and then state a general result for higher cumulants.

\begin{theorem}\label{theorem for kappa_3, kappa_4 and kappa_5}
We have
\begin{align}
\lim_{n\rightarrow \infty}\kappa_3(\gamma_n^*(1))&= -K_3(q), \label{kappa_3} \\
\lim_{n\rightarrow \infty}\kappa_4(\gamma_n^*(1))&= K_4(q), \label{kappa_4} \\
\lim_{n\rightarrow \infty}\kappa_5(\gamma_n^*(1))&= -K_5(q). \label{kappa_5}
\end{align}
\end{theorem}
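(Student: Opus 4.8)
The plan is to recognise the centred variable $\delta_n:=n-\gamma_n^*(1)$ as a sequence of random variables converging to Uchimura's random variable $X$ (the one with $\mathrm{Pr}(X=k)=q^{k}(q^{k+1})_\infty$), and then to transport cumulant information across this limit. The starting observation is that cumulants of order $t\ge 2$ are invariant under adding a constant and pick up a factor $(-1)^{t}$ under negation, so $\kappa_t(\gamma_n^*(1))=\kappa_t(n-\delta_n)=(-1)^{t}\kappa_t(\delta_n)$ for every $t\ge 2$. Hence it suffices to prove $\lim_{n\to\infty}\kappa_t(\delta_n)=K_t(q)$, and in fact the argument I have in mind establishes this for all $t\ge 1$ at once, recovering \eqref{kappa_1}--\eqref{kappa_2} and yielding \eqref{kappa_3}--\eqref{kappa_5} after the sign change.

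First I would rewrite the distribution \eqref{probability function define by Simon, Crippa, and Collenberg} in the variable $\delta_n$: putting $h=n-k$ gives, for $0\le k\le n-1$,
\begin{equation*}
\mathrm{Pr}(\delta_n=k)=q^{k}\prod_{j=k+1}^{n-1}(1-q^{j})=q^{k}\,\frac{(q)_{n-1}}{(q)_k},
\end{equation*}
so that for each fixed integer $m\ge 0$,
\begin{equation*}
\mathbb{E}\bigl[\delta_n^{\,m}\bigr]=\sum_{k=0}^{n-1}k^{m}q^{k}\,\frac{(q)_{n-1}}{(q)_k}.
\end{equation*}
Because $0<(q)_{n-1}/(q)_k\le 1$ for $q\in(0,1)$ and $\sum_{k\ge 0}k^{m}q^{k}<\infty$, dominated convergence for series lets one pass to the limit term by term, giving $\lim_{n\to\infty}\mathbb{E}[\delta_n^{\,m}]=\sum_{k=1}^{\infty}k^{m}q^{k}(q^{k+1})_\infty=M_m$, the Uchimura moment in \eqref{define M_m and K_m}. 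Equivalently, the moment generating function $\Psi_n(u):=\mathbb{E}[e^{u\delta_n}]=\sum_{k=0}^{n-1}(e^{u}q)^{k}(q)_{n-1}/(q)_k$ converges coefficientwise in $u$ --- indeed uniformly on a complex neighbourhood of $u=0$, its summands being dominated by $q^{k}e^{k\operatorname{Re}(u)}$ --- to $\sum_{m\ge 0}M_m u^{m}/m!=G(e^{u},q)=\mathbb{E}[e^{uX}]$.

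The final step is to take logarithms. Since the coefficient of $u^{t}$ in $\log\Psi_n(u)$ is a fixed polynomial (independent of $n$) in the first $t$ coefficients of $\Psi_n$, all of which have constant term $1$, the coefficientwise convergence $\Psi_n\to G(e^{u},q)$ forces
\begin{equation*}
\sum_{t\ge 1}\kappa_t(\delta_n)\,\frac{u^{t}}{t!}=\log\Psi_n(u)\;\longrightarrow\;\log G(e^{u},q)=\sum_{t\ge 1}h_t\,\frac{u^{t}}{t!}
\end{equation*}
coefficientwise; here the last equality is the cumulant expansion of $X$ recorded just after \eqref{mth cumulant}, and \eqref{mth cumulant} gives $h_t=K_t(q)$. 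Therefore $\lim_{n\to\infty}\kappa_t(\delta_n)=K_t(q)$ for all $t\ge 1$, and combining with $\kappa_t(\gamma_n^*(1))=(-1)^{t}\kappa_t(\delta_n)$ for $t\ge 2$ proves \eqref{kappa_3}, \eqref{kappa_4} and \eqref{kappa_5}.

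The one delicate point is the legitimacy of interchanging $\lim_{n\to\infty}$ with $\log$ and with coefficient extraction; the formal-power-series bookkeeping above disposes of this, but one could instead invoke uniform convergence of holomorphic functions on a disc about $u=0$ together with $G(1,q)=\sum_{k\ge 0}q^{k}(q^{k+1})_\infty=1\neq 0$. If one prefers to avoid generating functions altogether for these small $t$, an alternative is to compute $M_1,\dots,M_5$ as limits of the raw moments $m_j:=\mathbb{E}[\delta_n^{\,j}]$ and substitute into the universal moment-to-cumulant identities
\begin{align*}
\kappa_3&=m_3-3m_1m_2+2m_1^{3},\\
\kappa_4&=m_4-4m_1m_3-3m_2^{2}+12m_1^{2}m_2-6m_1^{4},\\
\kappa_5&=m_5-5m_1m_4-10m_2m_3+20m_1^{2}m_3+30m_1m_2^{2}-60m_1^{3}m_2+24m_1^{5},
\end{align*}
simplifying via $M_j=Y_j(K_1,\dots,K_j)$ from \eqref{Uchimura's gen}; this again returns $K_3,K_4,K_5$ for $\delta_n$ and hence $-K_3,K_4,-K_5$ for $\gamma_n^*(1)$, but at the price of heavier algebra and without revealing the general pattern.
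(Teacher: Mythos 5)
Your proposal is correct, but it follows a genuinely different route from the paper's proof of this theorem. You work directly with the centred variable $\delta_n=n-\gamma_n^*(1)$: your change of variable in \eqref{probability function define by Simon, Crippa, and Collenberg} giving $\mathrm{Pr}(\delta_n=k)=q^k(q)_{n-1}/(q)_k$ is right, the bound $(q)_{n-1}/(q)_k\le 1$ legitimizes dominated convergence, so every moment of $\delta_n$ converges to Uchimura's moment $M_m$, and since each cumulant is a universal polynomial in the first $t$ moments, $\kappa_t(\delta_n)\to h_t=K_t(q)$ by Uchimura's identity \eqref{mth cumulant}; the sign flip $\kappa_t(\gamma_n^*(1))=(-1)^t\kappa_t(\delta_n)$ for $t\ge 2$ then yields \eqref{kappa_3}--\eqref{kappa_5} (your moment-to-cumulant formulas, including the one for $\kappa_5$, are the standard ones). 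The paper instead proves, by induction on $k$, the recurrence $e_{n,k}=\sum_{\ell=1}^k\binom{k}{\ell}(-1)^{\ell-1}n^{k-\ell}a_{n,\ell}$ for the raw moments of $\gamma_n^*(1)$, feeds it into Corollary \ref{a new term of limit to our generalization} to get $\lim_n(n^k-a_{n,k})=Y_k(K_1,\dots,K_k)$, and then evaluates the three cumulant combinations of Bell polynomials via Dilcher's identities \cite{dilcher}; this showcases the $q$-series/Bell-polynomial machinery but is tied to the explicit identities for $t\le 5$ (indeed the paper itself says it could not push this method further). Your argument is more probabilistic and uniform in $t$: it identifies the real mechanism — $n-\gamma_n^*(1)$ converges, with all moments, to Uchimura's heap variable $X$ — and thus already proves the general Theorem \ref{Conjecture}, whose proof in the paper (via $Z_n=n-X_n$, the lemma $\mathbb{E}(Z_n^k)=n^k-a_{n,k}$, and convergence of the moment generating function to $(q)_\infty/(e^zq)_\infty$ using \cite{uchimura87}) is close in spirit to yours but still routes through the $a_{n,k}$ recurrences rather than through dominated convergence on the explicit distribution. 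The only external input you use, $h_t=K_t$ from \eqref{mth cumulant}, is a cited result of Uchimura, so relying on it is no weaker than the paper's reliance on Uchimura's and Dilcher's identities; what your route does not provide is the new limit identity $\lim_n(n^k-a_{n,k}(q))=\sum_n n^kq^n(q^{n+1})_\infty$ itself, which for the paper is a point of independent interest.
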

More generally,  we have the following result. 
\begin{theorem}\label{Conjecture}
For any natural number $t$ with $t>1$, we have
\begin{align*}
\lim_{n\rightarrow \infty}\kappa_t(\gamma_n^*(1))&= (-1)^t K_t(q).
\end{align*}
\end{theorem}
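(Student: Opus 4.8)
The plan is to show that the recentred variable $Z_n := n - \gamma_n^*(1)$ converges, in a strong enough sense, to Uchimura's random variable $X$, and then to read off the cumulants. Re-indexing the probability function \eqref{probability function define by Simon, Crippa, and Collenberg} by $j = n-h$ gives, for $0 \le j \le n-1$,
\begin{align*}
\mathrm{Pr}(Z_n = j) = q^{j}\prod_{\ell=j+1}^{n-1}\bigl(1-q^{\ell}\bigr),
\end{align*}
so that for each fixed $j$ one has $\mathrm{Pr}(Z_n=j)\to q^{j}(q^{j+1})_\infty = \mathrm{Pr}(X=j)$ as $n\to\infty$; that is, $Z_n$ converges in distribution to the random variable $X$ of Uchimura, whose moments are the $M_m$ of \eqref{define M_m and K_m}.

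Next I would upgrade this to convergence of all moments. Since every factor $1-q^{\ell}$ lies in $(0,1]$, we have the uniform bound $j^{m}\,\mathrm{Pr}(Z_n=j)\le j^{m}q^{j}$ with $\sum_{j\ge 0} j^{m}q^{j}<\infty$, so dominated convergence applied to $\mathbb{E}(Z_n^{m})=\sum_{j\ge 0} j^{m}\,\mathrm{Pr}(Z_n=j)$ yields
\begin{align*}
\lim_{n\to\infty}\mathbb{E}(Z_n^{m}) = \sum_{j=1}^{\infty} j^{m}q^{j}(q^{j+1})_\infty = M_{m} = \mathbb{E}(X^{m}), \qquad m\ge 1.
\end{align*}
Because the $t$-th cumulant of any random variable is a fixed universal polynomial in its first $t$ moments (the inverse of the Bell-polynomial relation in \eqref{Uchimura's gen}), convergence of all moments forces $\kappa_t(Z_n)\to\kappa_t(X)$, and by Uchimura's cumulant identity \eqref{mth cumulant} the latter equals $h_t = K_t(q)$.

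It remains to pass from $Z_n$ to $\gamma_n^*(1)$. Since $\gamma_n^*(1) = n - Z_n$ is obtained from $-Z_n$ by adding the constant $n$, and cumulants of order $t\ge 2$ are translation invariant while homogeneity gives $\kappa_t(-Z_n) = (-1)^{t}\kappa_t(Z_n)$, we get $\kappa_t(\gamma_n^*(1)) = (-1)^{t}\kappa_t(Z_n)$ for every $t>1$; letting $n\to\infty$ gives $(-1)^{t}K_t(q)$, as claimed. (For $t=1$ the constant $n$ is no longer absorbed, which is exactly why the statement is restricted to $t>1$; there one instead recovers \eqref{kappa_1}.)

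The argument is largely formal once the key observation is in place, namely that the re-indexed Simon--Crippa--Collenberg probabilities are precisely the finite truncations of Uchimura's distribution; equivalently, one may phrase everything through moment generating functions, checking that $\mathbb{E}\bigl(e^{u(\gamma_n^*(1)-n)}\bigr) = \mathbb{E}(e^{-uZ_n})\to G(e^{-u},q)$ uniformly on a neighbourhood of $u=0$ and comparing Taylor coefficients with $\log G(e^{t},q)=\exp\bigl(\sum_{m}h_m t^m/m!\bigr)$ at $t=-u$. The only genuinely delicate point is this last step: one must track carefully how the additive shift by $n$ and the factor $-1$ combine to produce exactly the sign $(-1)^{t}$, and that this rests on the translation invariance of higher cumulants, which fails at $t=1$.
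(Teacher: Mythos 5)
Your proposal is correct, but it reaches the result by a genuinely different route from the paper. The two arguments share only the first reduction: recentring to $Z_n=n-\gamma_n^*(1)$ and using translation invariance plus homogeneity of cumulants to turn the claim into $\lim_{n\to\infty}\kappa_t(Z_n)=K_t(q)$ for $t\ge 2$. From there the paper stays inside its $q$-series framework: it proves the moment recurrence \eqref{general recurrence for e_{n,k}} by induction (in the proof of Theorem \ref{theorem for kappa_3, kappa_4 and kappa_5}), deduces $\mathbb{E}(Z_n^k)=n^k-a_{n,k}$ (Lemma \ref{lem1}), evaluates $\lim_{n\to\infty}(n^k-a_{n,k})$ via Corollary \ref{a new term of limit to our generalization} — hence via the new identity of Theorem \ref{a new q-series identity} — and then sums over $k$ to identify the limiting moment generating function of $Z_n$ with $(q)_\infty/(e^zq)_\infty$, which it matches against the computation $\sum_{t\ge1}K_t(q)z^t/t!=\log\bigl((q)_\infty/(e^zq)_\infty\bigr)$. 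You instead observe that the re-indexed Simon--Crippa--Collenberg probabilities $q^j\prod_{\ell=j+1}^{n-1}(1-q^\ell)$ are exactly truncations of Uchimura's distribution $q^j(q^{j+1})_\infty$, upgrade the resulting convergence in distribution to convergence of all moments by the domination $j^m\,\mathrm{Pr}(Z_n=j)\le j^mq^j$, conclude $\kappa_t(Z_n)\to\kappa_t(X)$ because each cumulant is a fixed polynomial in the first $t$ moments, and finish by quoting Uchimura's identity \eqref{mth cumulant}, $\kappa_t(X)=K_t(q)$. Your route is shorter and more probabilistic, avoids the recurrence machinery and the Eulerian/Bernoulli identity altogether, and it explicitly justifies the limit--sum interchange (via dominated convergence) that the paper carries out without comment; what it gives up is self-containedness, since it uses Uchimura's cumulant identity \eqref{mth cumulant} as a black box, whereas the paper's longer route in effect re-derives that fact and ties the theorem to its new identities, which is part of the paper's purpose.
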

In the next section,  we collect a few well-known results which will be useful throughout the paper.

\section{Preliminary Tools}\label{Preliminary}
The generating function for Bernoulli numbers is given by 
\begin{align*}
\sum_{n=0}^\infty B_n \frac{x^n}{n!} = \frac{x}{e^x-1},  \quad |x| < 2\pi.  
\end{align*}
The generating function for Eulerian polynomials \cite[p.~244]{comtet}  is as follows
\begin{align*}
\sum_{n=0}^\infty A_n(t) \frac{x^n}{n!} = \frac{t-1}{t-e^{x(t-1)}}.
\end{align*}
Eulerian polynomials satisfy the following recurrence relation:
\begin{align}\label{recurrence relation of Eulerian polynomial}
A_0(t)=1,  ~ A_k(t)=\sum_{j=0}^{k-1} {k \choose j} A_j(t)(t-1)^{k-1-j} ~~\textrm{ for}~~ k \geq 1.
\end{align}
One can also show that
\begin{align}\label{eulerian polynomial}
\sum_{n=1}^\infty n^k x^n = \frac{xA_k(x)}{(1-x)^{k+1}}.
\end{align}
Bernoulli showed that the sum of $k$-th powers of the first $n-1$ natural numbers can be explicitly written as    
\begin{align}\label{sum of k-th power} 
\sum_{m=1}^{n-1} m^k=\frac{1}{k+1}\sum_{j=0}^k {k+1 \choose j} B_j n^{k+1-j}.
\end{align}
Now we state and prove a lemma that will be crucial to prove Theorem \ref{a new q-series identity}.  This result gives a relation between Bernoulli numbers and Eulerian polynomials.  
\begin{lemma}\label{relation between B_n and A_n(t)}
For any non-negative integer $k$ and complex number $t$, we have
\begin{align*}
S_k(t):=\frac{1}{k+1}\sum_{j=0}^k {k+1 \choose j} B_j(1-t)^j A_{k+1-j}(t)=tA_k(t).
\end{align*}
\end{lemma}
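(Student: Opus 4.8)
The plan is to deduce the identity from a single functional equation between two exponential generating functions and then read off coefficients. Set
\begin{align*}
f(x) := \sum_{j\geq 0}\frac{B_j(1-t)^j}{j!}\,x^j = \frac{x(1-t)}{e^{x(1-t)}-1},\qquad g(x):=\sum_{m\geq 0}\frac{A_m(t)}{m!}\,x^m = \frac{t-1}{t-e^{x(t-1)}},
\end{align*}
the closed forms being obtained by specializing the generating functions for the Bernoulli numbers and the Eulerian polynomials recorded above. The crux is to put the product $f(x)g(x)$ into closed form: the substitution $u=e^{x(t-1)}$ (so that $e^{x(1-t)}=u^{-1}$) rewrites it as $\dfrac{x(t-1)^2\,u}{(u-1)(t-u)}$, and the partial-fraction split $\dfrac{u}{(u-1)(t-u)}=\dfrac{1}{(t-1)(u-1)}+\dfrac{t}{(t-1)(t-u)}$ turns this into
\begin{align*}
f(x)\,g(x) = \frac{x(t-1)}{e^{x(t-1)}-1} + x\,t\,g(x).
\end{align*}

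Next I would expand both sides as power series in $x$ and match coefficients. On the left, the Cauchy product gives $n!\,[x^n]\bigl(f(x)g(x)\bigr)=\sum_{j=0}^{n}\binom{n}{j}B_j(1-t)^jA_{n-j}(t)$. On the right, the first term is $\sum_{n\geq 0}\dfrac{B_n(t-1)^n}{n!}x^n$ and the second is $t\sum_{m\geq 0}\dfrac{A_m(t)}{m!}x^{m+1}$; hence, for $n\geq 1$,
\begin{align*}
\sum_{j=0}^{n}\binom{n}{j}B_j(1-t)^jA_{n-j}(t) = B_n(t-1)^n + n\,t\,A_{n-1}(t).
\end{align*}
Taking $n=k+1$ and separating off the $j=n$ summand, which is $B_{k+1}(1-t)^{k+1}A_0(t)=B_{k+1}(1-t)^{k+1}$, identifies the remaining sum with $(k+1)S_k(t)$; using $(1-t)^{k+1}=(-1)^{k+1}(t-1)^{k+1}$ this rearranges to
\begin{align*}
(k+1)\,S_k(t) = (k+1)\,t\,A_k(t) + B_{k+1}\,(t-1)^{k+1}\bigl(1-(-1)^{k+1}\bigr).
\end{align*}

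Finally, the correction term vanishes for every $k\geq 1$: when $k+1$ is even the factor $1-(-1)^{k+1}$ is zero, and when $k+1$ is odd and at least $3$ the Bernoulli number $B_{k+1}$ is zero, since the odd-index Bernoulli numbers beyond $B_1$ all vanish. This gives $S_k(t)=tA_k(t)$, the case $k=0$ being a direct check from $B_0=1$, $A_0(t)=A_1(t)=1$. I expect the only genuinely non-routine step to be discovering and verifying the closed form of $f(x)g(x)$ — i.e. recognising that the product of the Bernoulli and Eulerian exponential generating functions decomposes, after the substitution $u=e^{x(t-1)}$ and a partial-fraction step, into one Bernoulli-type series plus $xt$ times the Eulerian generating function; once that functional identity is in place, the remainder is coefficient bookkeeping together with the vanishing of $B_n$ for odd $n\geq 3$.
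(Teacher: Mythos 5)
Your argument is correct for every $k\geq 1$, and at bottom it runs on the same engine as the paper's proof: multiply the Bernoulli exponential generating function $f(x)=\frac{x(1-t)}{e^{x(1-t)}-1}$ by the Eulerian one $g(x)=\frac{t-1}{t-e^{x(t-1)}}$ and compare coefficients. The difference lies in the bookkeeping and in the closing step. The paper multiplies $f$ by $g-1$, which is exactly $\sum_{k\geq 0}S_k(t)\,x^{k+1}/k!$, simplifies the product to $x\,e^{x(t-1)}g(x)$, and finishes with the Eulerian recurrence \eqref{recurrence relation of Eulerian polynomial}; no property of Bernoulli numbers beyond their generating function is used. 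You work with the full product $fg$; your functional equation $f(x)g(x)=\frac{x(t-1)}{e^{x(t-1)}-1}+xt\,g(x)$ is correct and is in fact equivalent to the paper's identity $f(x)\bigl(g(x)-1\bigr)=x e^{x(t-1)}g(x)$, since $f(x)=\frac{x(t-1)e^{x(t-1)}}{e^{x(t-1)}-1}=\frac{x(t-1)}{e^{x(t-1)}-1}+x(t-1)$ and $x g(x)\bigl(t-e^{x(t-1)}\bigr)=x(t-1)$. Because you retain the constant term of $g$, a boundary summand $B_{k+1}(1-t)^{k+1}$ appears upon coefficient extraction, and you need the extra arithmetic input that the odd-indexed Bernoulli numbers $B_3,B_5,\dots$ vanish (together with the parity factor $1-(-1)^{k+1}$) to remove it, whereas the paper instead closes with the recurrence for $A_k$. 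Both routes are legitimate; yours trades the Eulerian recurrence for the vanishing of odd Bernoulli numbers.

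One point needs correcting: your final parenthetical about $k=0$. With the normalization used in this paper (forced by \eqref{eulerian polynomial} and by the exponential generating function as written), $A_1(t)=1$, so the ``direct check'' gives $S_0(t)=B_0A_1(t)=1$, which is not $tA_0(t)=t$; equivalently, in your own formula the correction term at $k=0$ equals $2B_1(t-1)=1-t\neq 0$. Thus your argument establishes the identity precisely for $k\geq 1$, and the $k=0$ case of the statement is actually false as written. This is a defect of the statement rather than of your proof --- the paper's own last step invokes \eqref{recurrence relation of Eulerian polynomial}, which likewise holds only for $k\geq 1$ --- but you should state the restriction $k\geq 1$ rather than claim a verification at $k=0$.
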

\begin{proof}
 Let us consider the generating function for $S_k(t)$,  
{\allowdisplaybreaks
\begin{align*}
\sum_{k=0}^\infty S_k(t) \frac{x^{k+1}}{k!} &= \sum_{k=0}^\infty \frac{1}{k+1}\sum_{j=0}^k {k+1 \choose j} B_j(1-t)^j A_{k+1-j}(t) \frac{x^{k+1}}{k!}\\
&= \sum_{k=0}^\infty \sum_{j=0}^k \frac{B_j (1-t)^j}{j!} \frac{A_{k+1-j}(t)}{(k+1-j)!} x^{k+1}\\
&= \sum_{m=0}^\infty B_m \frac{(x(1-t))^m}{m!} \sum_{n=0}^\infty A_{n+1}(t) \frac{x^{n+1}}{(n+1)!}\\
&=\frac{x(1-t)}{e^{x(1-t)}-1} \left( \frac{t-1}{t-e^{x(t-1)}}-1 \right)\\
&=\frac{t-1}{t-e^{x(t-1)}} \times xe^{x(t-1)}\\
&=\sum_{n=0}^\infty A_n(t) \frac{x^n}{n!} \sum_{m=0}^\infty (t-1)^m \frac{x^{m+1}}{m!}\\
&=\sum_{k=0}^\infty \sum_{j=0}^k {k \choose j} A_j(t)(t-1)^{k-j} \frac{x^{k+1}}{k!}.
\end{align*}}
Upon comparing the coefficients of $\frac{x^{k+1}}{k!}$, we get
\begin{align*}
S_k(t)&=\sum_{j=0}^k {k \choose j} A_j(t)(t-1)^{k-j}\\
&=\sum_{j=0}^{k-1} {k \choose j} A_j(t)(t-1)^{k-j} + A_k(t)\\
&=(t-1)A_k(t)+A_k(t)=tA_k(t),
\end{align*}
where in the penultimate step we used \eqref{recurrence relation of Eulerian polynomial}.  This completes the proof.  
\end{proof}
In the next section,  we present the proofs of our main results.

\section{Proof of main results}\label{proofs}
\begin{proof}[Theorem \ref{a new q-series identity}][]
We will start with the left hand side of Theorem \ref{a new q-series identity},  that is,  
\begin{align}\label{first step in proof of Theorem}
\sum_{n=1}^\infty \left(\sum_{m=1}^nm^k\right) q^n (q^{n+1})_\infty &= \sum_{n=1}^\infty \left(\sum_{m=1}^{n-1}m^k\right) q^n(q^{n+1})_\infty + \sum_{n=1}^\infty n^k q^n (q^{n+1})_\infty \nonumber\\
&= \sum_{n=1}^\infty \frac{1}{k+1}\sum_{j=0}^k {k+1 \choose j} B_j n^{k+1-j} q^n(q^{n+1})_\infty \nonumber\\
& + \sum_{n=1}^\infty \frac{(-1)^{n-1} q^{{n+1 \choose 2}}A_k(q^n)}{(1-q^n)^{k} (q)_n},
\end{align}
where in the final step we used \eqref{sum of k-th power} to obtain the first sum and  \eqref{Ramanujan-type for Uchimura} to get the second sum.  
Now we shall try to simplify the first sum.  Interchanging the sums and again making use of  the identity \eqref{Ramanujan-type for Uchimura},  one can see that 
\begin{align}\label{first term}
& \sum_{n=1}^\infty \frac{1}{k+1}\sum_{j=0}^k {k+1 \choose j} B_j n^{k+1-j} q^n(q^{n+1})_\infty  \nonumber \\
& = \frac{1}{k+1}\sum_{j=0}^k {k+1 \choose j} B_j \sum_{n=1}^\infty \frac{(-1)^{n-1} q^{{n+1 \choose 2}}A_{k+1-j}(q^n)}{(1-q^n)^{k+1-j} (q)_n} \nonumber\\
&=  \frac{1}{k+1}  \sum_{n=1}^\infty \frac{(-1)^{n-1} q^{{n+1 \choose 2}}}{(1-q^n)^{k+1} (q)_n}  \sum_{j=0}^k {k+1 \choose j} B_j(1-q^n)^j A_{k+1-j}(q^n)\nonumber\\
&=\sum_{n=1}^\infty \frac{(-1)^{n-1} q^{{n+1 \choose 2}}}{(1-q^n)^{k+1} (q)_n} S_k(q^n)=\sum_{n=1}^\infty \frac{(-1)^{n-1} q^{{n+1 \choose 2}} q^nA_k(q^n)}{(1-q^n)^{k+1} (q)_n},  
\end{align}
where in the last step we employed Lemma \ref{relation between B_n and A_n(t)}. Now utilizing \eqref{first term} in \eqref{first step in proof of Theorem},  we get
\begin{align*}
\sum_{n=1}^\infty \left(\sum_{m=1}^nm^k\right) q^n (q^{n+1})_\infty &= \sum_{n=1}^\infty \frac{(-1)^{n-1} q^{{n+1 \choose 2}} q^nA_k(q^n)}{(1-q^n)^{k+1} (q)_n} + \sum_{n=1}^\infty \frac{(-1)^{n-1} q^{{n+1 \choose 2}}A_k(q^n)}{(1-q^n)^{k} (q)_n}\\
&= \sum_{n=1}^\infty \frac{(-1)^{n-1} q^{{n+1 \choose 2}}A_k(q^n)}{(1-q^n)^{k} (q)_n} \left( \frac{q^n}{1-q^n}+1 \right)\\
&=\sum_{n=1}^\infty \frac{(-1)^{n-1} q^{{n+1 \choose 2}}A_k(q^n)}{(1-q^n)^{k+1} (q)_n}.
\end{align*}
This finishes  the proof of Theorem \ref{a new q-series identity}.
\end{proof}

\begin{proof}[Theorem \ref{new expression to Andrews Crippa Simon}][]
Given that $f(n)=\sum_{k\geq 0} c_k n^k$ is a polynomial in $n$.  Consider $F(x)$ to be the generating function for $f(n)$,  that is,  
\begin{align*}
F(x)&:=\sum_{n=1}^\infty f(n)x^n = \sum_{n=1}^\infty \sum_{k\geq 0} c_k n^k x^n
=\sum_{k\geq 0} c_k \sum_{n=1}^\infty n^k x^n = \sum_{k\geq 0} c_k \frac{xA_k(x)}{(1-x)^{k+1}},
\end{align*}
where in the last equality we used \eqref{eulerian polynomial}.  
Substituting $x=q^n$ in the above expression, we see that 
\begin{align}\label{function F(q^n)}
F(q^n) = \sum_{k\geq 0} c_k\frac{q^nA_k(q^n)}{(1-q^n)^{k+1}}.
\end{align}
Now define the generating function for the sequence $a_n(q)$ as follows: 
\begin{align*}
A(x)&:=\sum_{n=1}^\infty a_n(q) x^n\\
&=\sum_{n=1}^\infty \left(f(n)+(1-q^{n-1})a_{n-1}(q)\right) x^n\\
&=F(x) + xA(x) - xA(qx).
\end{align*}
Thus,  we obtain
\begin{align*}
A(x)=\frac{F(x)}{1-x} - \frac{x}{1-x}A(qx).
\end{align*}
This recurrence relation suggests that 
\begin{align*}
A(x) &= \sum_{n=0}^\infty \frac{(-1)^{n} F(q^nx) x^n q^{n \choose 2}}{(x)_{n+1}}.
\end{align*}
Put $x=q$ in the above expression and then use \eqref{function F(q^n)} to  see that 
\begin{align*}
A(q)&=\sum_{n=1}^\infty \frac{(-1)^{n-1} q^{n \choose 2}}{(q)_{n}} \sum_{k\geq 0} c_k\frac{q^nA_k(q^n)}{(1-q^n)^{k+1}}\\
&=\sum_{k\geq 0} c_k \sum_{n=1}^\infty \frac{(-1)^{n-1} A_k(q^n) q^{n+1 \choose 2}}{(1-q^n)^{k+1}(q)_{n}}.
\end{align*}
Now apply Theorem \ref{a new q-series identity} to get
\begin{align}
A(q)&=\sum_{k\geq 0} c_k \sum_{n=1}^\infty \left(\sum_{i=1}^ni^k\right) q^n(q^{n+1})_\infty \nonumber \\
&=\sum_{n=1}^\infty \left(\sum_{i=1}^n f(i)\right) q^n(q^{n+1})_\infty.   \label{final step1}
\end{align}
From the recurrence relation of the sequence of polynomials $a_i(q)$,  it is evident that 
\begin{align*}
& a_i(q)=f(i)+(1-q^{i-1})a_{i-1}(q)\\
\Longrightarrow~ & f(i) - a_i(q) + a_{i-1}(q)  = q^{i-1}a_{i-1}(q).  
\end{align*}
Now taking the sum over $i$ from $1$ to $n$,  then letting $n \rightarrow \infty$ and finally using \eqref{final step1},  the result follows.  
\end{proof}

\begin{proof}[Corollary \ref{a new term of limit to our generalization}][]
Observe that
\begin{align*}
\sum_{i=1}^n f_k(i) &= \sum_{i=1}^n \sum_{j=1}^k {k \choose j}(-1)^{j-1}i^{k-j},\\
&= \sum_{i=1}^n \left( i^k - (i-1)^k \right) = n^k.
\end{align*}
Now apply Theorem \ref{new expression to Andrews Crippa Simon} to complete the proof of the corollary.
\end{proof}

\begin{proof}[Corollary \ref{Limiting exp for Dilcher}][]
As we have taken $f_k(n) = {n-1 \choose k-1}$,  we find that
$$
\sum_{i=1}^n f_k(i)  = \sum_{i=1}^n {i-1 \choose k-1} = {n \choose k}. 
$$
Hence the proof of \eqref{limit form_Dilcher}  immediately follows from Theorem \ref{new expression to Andrews Crippa Simon}.  
\end{proof}

\begin{proof}[Theorem \ref{theorem for kappa_3, kappa_4 and kappa_5}][]
For simplicity,  throughout this proof,  let us denote $\gamma_n^*(1)$ as $X_n$.
We define  $e_{n,k}:=\mathbb{E}(X_n^k)$.  
Simon,  Crippa and Collenberg \cite{SCC1993} proved  that $e_{n,1}$ and $e_{n,2}$ satisfy the following recurrence relations,
\begin{align}
e_{n,1} &= 1 + (1- q^{n-1}) e_{n-1, 1},  \label{first}\\
e_{n, 2} &= 2 n e_{n, 1} - a_{n,  2}, \label{second}
\end{align}
where $$a_{n,2}= \sum_{i=1}^n f_2(i) \prod_{j=i}^{n-1} (1- q^j), ~~ \textrm{and}~~ f_2(i)= (2i-1).  $$
More generally,  for any fixed $k \geq 1$ and $n \geq 1$,   we will show that $e_{n,k}$ satisfies the 
 following relation,   
\begin{align}\label{general recurrence for e_{n,k}}
e_{n,k} 
&= \sum_{\ell=1}^{k} {k \choose \ell} (-1)^{\ell-1}n^{k-\ell} a_{n,  \ell},  
\end{align}
where
\begin{align}\label{defn of a_{n,k}}
a_{n,1}=e_{n,1},\hspace{1cm}a_{n,k}=\sum_{i=1}^nf_k(i)\prod_{j=i}^{n-1} (1-q^j),\hspace{1cm} \mathrm{for}\hspace{0.2cm} k\geq 2,
\end{align}
and
\begin{align}\label{defn of f_k}
f_k(i)=\sum_{j=1}^k {k \choose j} (-1)^{j-1} i^{k-j}=\sum_{j=0}^{k-1}{k\choose j}(-1)^{k-j-1}i^j.
\end{align}
It is easy to see that the relation \eqref{general recurrence for e_{n,k}} is true for $k=1$ as $e_{n,1}=a_{n,1}$ for all $n \geq 1$ by our assumption.   Moreover,   the relation \eqref{general recurrence for e_{n,k}} is also true for $k=2$ as we know that \eqref{second} holds for all $n \geq 1$.  Let us assume that the relation for $e_{m,  j}$ is true for any $1 \leq m < \infty$ when $ 3 \leq j \leq k-1$ and   $ 1\leq m \leq n-1$ for $ j=k$.  Now we shall show that the relation holds for $e_{n,k}$.  Since $e_{n,k}= \mathbb{E}(X_n^k)$,   we can write
{\allowdisplaybreaks
\begin{align}
e_{n,k}  & = \sum_{h=1}^n h^k P(X_n =h) \nonumber \\
& = \sum_{h=1}^n h^k q^{n-h} \prod_{i=1}^{h-1} (1- q^{n-i}) \nonumber \\
&= q^{n-1} + (1- q^{n-1}) \sum_{h=2}^n h^k q^{n-h} \prod_{i=2}^{h-1} (1- q^{n-i}) \nonumber \\
& =  q^{n-1} + (1- q^{n-1})   \sum_{h=1}^{n-1} (h+1)^k q^{n-h-1} \prod_{i=1}^{h-1} (1- q^{n-i-1}) \nonumber \\
& = q^{n-1} + (1- q^{n-1})   \sum_{h=1}^{n-1} \left( 1+ \sum_{j=1}^k  {k \choose j} h^j \right) q^{n-h-1} \prod_{i=1}^{h-1} (1- q^{n-i-1}) \nonumber \\
& = q^{n-1} + (1- q^{n-1}) \sum_{h=1}^{n-1} P(X_{n-1}=h) + (1- q^{n-1}) \sum_{h=1}^{n-1} \sum_{j=1}^k {k \choose j} h^j  P(X_{n-1}=h) \nonumber \\
& = 1 + (1- q^{n-1}) \sum_{j=1}^k {k \choose j} \sum_{h=1}^{n-1} h^j  P(X_{n-1}=h) \nonumber \\
& = 1 + (1- q^{n-1}) \sum_{j=1}^k {k \choose j} e_{n-1,  j}  \nonumber \\
& = 1 + (1- q^{n-1}) \sum_{j=1}^k {k \choose j}  \sum_{\ell= 1}^j {j \choose \ell} (-1)^{\ell -1} (n-1)^{j- \ell} a_{n-1,  \ell} \quad (\textrm{using inductive hypothesis}) \nonumber \\
& = 1 + (1- q^{n-1}) \sum_{\ell= 1}^k (-1)^{\ell -1} a_{n-1,  \ell} \sum_{j= \ell}^k {k \choose j} {j \choose \ell}  (n-1)^{j- \ell} \nonumber \\
&= 1 + (1- q^{n-1}) \sum_{\ell= 1}^k (-1)^{\ell -1} a_{n-1,  \ell} \sum_{j=0}^{k-\ell} {k \choose j+\ell} {j+\ell \choose \ell} (n-1)^j  \nonumber \\
& = 1 + (1- q^{n-1}) \sum_{\ell= 1}^k {k \choose \ell} (-1)^{\ell -1} a_{n-1,  \ell} \sum_{j=0}^{k-\ell} {k-\ell \choose j} (n-1)^j  \nonumber \\
& =  1 + (1- q^{n-1}) \sum_{\ell= 1}^k {k \choose \ell} (-1)^{\ell -1} a_{n-1,  \ell}~ n^{k - \ell}.  \label{final step}
\end{align}}
From \eqref{defn of a_{n,k}} and \eqref{defn of f_k},  one can easily observe that,  for any $k \in \mathbb{N}$,  the following recurrence relation for $a_{n,k}$ holds: 
\begin{align}\label{relation of a_{n,k}}
a_{n,k} = f_k(n) + (1- q^{n-1}) a_{n-1,  k},~~\textrm{where}~~\sum_{i=1}^n f_k(i) = n^k.   
\end{align}
 Use this recurrence relation in \eqref{final step} to see that 
\begin{align*}
e_{n, k} & = 1 +  \sum_{\ell= 1}^k {k \choose \ell} (-1)^{\ell -1} n^{k - \ell} \left( a_{n, \ell} - f_{\ell}(n)  \right) \\
& = 1 +  \sum_{\ell= 1}^k {k \choose \ell} (-1)^{\ell -1} n^{k - \ell}  a_{n, \ell} - \sum_{\ell= 1}^k {k \choose \ell} (-1)^{\ell -1} n^{k - \ell}  f_{\ell}(n). 
\end{align*}
It becomes clear at this juncture that  to prove \eqref{general recurrence for e_{n,k}} it is enough to show  
\begin{align*}
\sum_{\ell= 1}^k {k \choose \ell} (-1)^{\ell -1} n^{k - \ell}  f_{\ell}(n) =1.  
\end{align*}
Using the definition \eqref{defn of f_k} of $f_\ell(n)$,  we can see that 
\begin{align*}
\sum_{\ell= 1}^k {k \choose \ell} (-1)^{\ell -1} n^{k - \ell}  f_{\ell}(n) & = \sum_{\ell= 1}^k  \sum_{j=1}^\ell  {k \choose \ell}  {\ell \choose j}  (-1)^{\ell+j} n^{k - j}  \\ 
& = \sum_{j=1}^k \sum_{\ell=j}^k {k \choose j} {k-j \choose \ell-j} (-1)^{\ell+j} n^{k - j}  \\
&=  \sum_{j=1}^k {k \choose j}  n^{k-j}  \sum_{\ell=0}^{k-j} {k-j \choose \ell} (-1)^\ell \\
&= 1 + \sum_{j=1}^{k-1} {k \choose j}  n^{k-j} (1-1)^{k-j} \\
&=1.
\end{align*}
This completes the proof of the relation \eqref{general recurrence for e_{n,k}} for $e_{n,k}$.  
As we know that the sequence $a_{n,k}$ satisfies the relation \eqref{relation of a_{n,k}}, so by applying Corollary \ref{a new term of limit to our generalization},  we have 
\begin{align*}
\lim_{n\rightarrow \infty}\left(n^k- a_{n,k}(q)\right) = \sum_{n=1}^\infty n^k q^n(q^{n+1})_\infty.
\end{align*}
Further,  utilize Uchimura's identity \eqref{Uchimura's gen} 
to see that 
\begin{align}\label{Limit form of Bell polynomials}
\lim_{n\rightarrow \infty}\left(n^k- a_{n,k}(q)\right) =  Y_k(K_1,K_2,\dots,K_m),
\end{align}
where $Y_k$ denotes the Bell polynomial defined in \eqref{define Bell polynomial}.  
Now  we are ready to calculate the limiting value of the third cumulant,  that is,  
\begin{align*}
 \lim_{n\rightarrow \infty}\left(\kappa_3(X_n)\right) & = \lim_{n\rightarrow \infty}\mathbb{E}(X_n-\mathbb{E}(X_n))^3 \nonumber \\
&=\lim_{n\rightarrow \infty} \left( \mathbb{E}(X_n^3) -3\mathbb{E}(X_n)\mathbb{E}(X_n^2) + 2\mathbb{E}(X_n)^3 \right) \nonumber \\
&=\lim_{n\rightarrow \infty} \left( e_{n,3} -3e_{n,1}e_{n,2} + 2e_{n,1}^3 \right) \nonumber \\
&=-\lim_{n\rightarrow \infty} \left( (n^3- a_{n,3}) -3(n-a_{n,1})(n^2- a_{n,2}) + 2(n-a_{n,1})^3 \right) \nonumber \\
&=- \left(Y_3 - 3 Y_1 Y_2 + 2 Y_1^3 \right) \nonumber \\
&=-K_2,   
\end{align*}
where in the ante-penultimate step we have used the recurrence relation \eqref{general recurrence for e_{n,k}},  whereas   in the penultimate step we used \eqref{Limit form of Bell polynomials} and in the final step we used Dilcher's identity \cite[p.~ 85,  Equation (2.2)]{dilcher}.  This proves \eqref{kappa_3}. 

Now we shall calculate the limiting value of the fourth cumulant,  that is,  
\begin{align*}
 \lim_{n\rightarrow \infty}\left(\kappa_4(X_n)\right) & =\lim_{n\rightarrow \infty}\left( \mathbb{E} (X_n - \mathbb{E}(X_n))^4  -3\left(\mathbb{E}(X_n-\mathbb{E}(X_n))^2\right)^2 \right) \\
&=\lim_{n\rightarrow \infty} \left( \mathbb{E}(X_n^4) - 4 \mathbb{E}(X_n) \mathbb{E}(X_n^3) -3 \mathbb{E}(X_n^2)^2 + 12 \mathbb{E}(X_n)^2 \mathbb{E}(X_n^2) - 6 \mathbb{E}(X_n)^4  \right)  \\
&=\lim_{n\rightarrow \infty}\left( e_{n,4} -4e_{n,1}e_{n,3} -3e_{n,2}^2  + 12e_{n,2}e_{n,1}^2 - 6e_{n,1}^4 \right)  \\
&=\lim_{n\rightarrow \infty}  (n^4- a_{n,4}) -4(n-a_{n,1})(n^3- a_{n,3}) -3(n^2-a_{n,2})^2 +12(n^2- a_{n,2})(n-a_{n,1})^2  \\ 
&\quad -6(n-a_{n,1})^4 ) \\
& = (Y_4- 4 Y_1 Y_3 - 3 Y_2^2 + 12 Y_2 Y_1^2 - 6 Y_1^4 ) \\
&= K_4. 
\end{align*}
Here again, we made use of  \eqref{general recurrence for e_{n,k}},   \eqref{Limit form of Bell polynomials} and  
Dilcher's identity \cite[p.~ 85,  Equation (2.3)]{dilcher}.  This completes the proof of \eqref{kappa_4}.  
Finally,  we evaluate the limiting value of the fifth cumulant in the following way: 
{\allowdisplaybreaks
\begin{align*}
\lim_{n\rightarrow \infty}\left(\kappa_5(X_n)\right)&  = \lim_{n\rightarrow \infty}\left( \mathbb{E} (X_n - \mathbb{E}(X_n))^5 - 10 \mathbb{E}(X_n-\mathbb{E}(X_n))^3\mathbb{E}(X_n-\mathbb{E}(X_n))^2 \right) \\
&=\lim_{n\rightarrow \infty} ( \mathbb{E}(X_n^5) - 5 \mathbb{E}(X_n^4)\mathbb{E}(X_n) + 20 \mathbb{E}(X_n^3)\mathbb{E}(X_n)^2 - 40 \mathbb{E}(X_n^2)\mathbb{E}(X_n)^3   \\ 
&\hspace{2cm}+ 24\mathbb{E}(X_n)^5 - 10\mathbb{E}(X_n^3)\mathbb{E}(X_n^2) + 10\mathbb{E}(X_n^2)^2\mathbb{E}(X_n)) \nonumber\\
&=\lim_{n\rightarrow \infty} \left( e_{n,5} - 5e_{n,4}e_{n,1} +20e_{n,3}e_{n,1}^2 -40e_{n,2}e_{n,1}^3 +24e_{n,1}^5 -10e_{n,3}e_{n,2} +10e_{n,2}^2e_{n,1} \right)  \\
&=-\lim_{n\rightarrow \infty}( (n^5- a_{n,5}) -5(n^4- a_{n,4})(n- a_{n,1}) +20(n^3- a_{n,3})(n- a_{n,1})^2  \nonumber\\
&\hspace{1cm}-40(n^2- a_{n,2})(n-a_{n,1})^3 +24(n-a_{n,1})^5 -10(n^3-a_{n,3})(n^2-a_{n,2})  \\
&\hspace{2.5cm} +10(n^2-a_{n,2})^2(n-a_{n,1}) )  \\
&=- ( Y_5 - 5 Y_4 Y_1 + 20 Y_3 Y_1^2 - 40 Y_2 Y_1^3 + 24 Y_1^5 - 10 Y_3 Y_2 + 10 Y_2^2 Y_1 ) \\
&=- K_5.
\end{align*}}
Here also to obtain the final step we employed  Dilcher's identity \cite[p.~ 85,  Theorem 1]{dilcher} and in the penultimate step we invoked \eqref{Limit form of Bell polynomials}.  
\end{proof}

So far we have not been able to extend the above technique for higher cumulants.  However,   we use another method to prove the general case of the $t$-th cumulant for any $t$.

\section{Proof of Theorem \ref{Conjecture}: The general case}\label{general case}


We use the notations as in the proof of Theorem \ref{theorem for kappa_3, kappa_4 and kappa_5}.   Let  us define $Z_n:=n-X_n$,  where $X_n = \gamma_n^*(1)$. 
Let $\kappa_t$ denote the $t$-th cumulant and $e_{n,k} = \mathbb{E}(X_n^k)$.  For any random variable $X$ and constant $c$,  it is well-known that $\kappa_1(X+c)= \kappa_1(X)+c$,  and for $t \geq 2$ one has $\kappa_t(X+ c)= \kappa_t(X)$,  and $\kappa_t(c X)= c^t \kappa_t(X)$ for any $t\geq 1$.   Hence,  for $t \geq 2$,  one can see that 
$$
\lim_{n \to \infty} \kappa_t(Z_n)= \lim_{n\to\infty} (-1)^t\kappa_t(X_n).  
$$
Thus,  for $t\geq 2$,  Theorem \ref{Conjecture} is equivalent to the fact that
\begin{align}\label{Conjecture in term of Y_n}
\lim_{n \to \infty} \kappa_t(Z_n) = K_{t}(q),
\end{align}
where 
\begin{align*}
K_{t+1}(q)=\sum_{n=1}^\infty \sigma_{t}(n)q^n = \sum_{n=1}^\infty \frac{n^tq^n}{1-q^n}
\end{align*}
be the divisor generating function and for $t=1$, we already know
\begin{align*}
\lim_{n \to \infty} \kappa_1(Z_n)= \lim_{n\to\infty}(n- \kappa_1(X_n)) = K_{1}(q).
\end{align*}

We first prove a lemma that plays a vital role to prove \eqref{Conjecture in term of Y_n}.  
\begin{lemma}\label{lem1}
We have
\begin{align*}
\mathbb{E}(Z_n^k) = n^k - a_{n,k},
\end{align*}
where $a_{n,k}$ is defined as in \eqref{defn of a_{n,k}}.
\end{lemma}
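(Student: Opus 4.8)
The plan is to deduce the lemma directly from the closed form \eqref{general recurrence for e_{n,k}} for the moments $e_{n,k}=\mathbb{E}(X_n^k)$ already obtained in the proof of Theorem \ref{theorem for kappa_3, kappa_4 and kappa_5}, via a binomial expansion. Since $Z_n = n - X_n$, the binomial theorem together with $e_{n,0}=1$ gives
\begin{align*}
\mathbb{E}(Z_n^k) = \sum_{j=0}^k {k \choose j}(-1)^j n^{k-j} e_{n,j} = n^k + \sum_{j=1}^k {k \choose j}(-1)^j n^{k-j} e_{n,j}.
\end{align*}
First I would substitute \eqref{general recurrence for e_{n,k}} for each $e_{n,j}$ with $1 \le j \le k$, interchange the two finite sums so that the outer one runs over $\ell$ and the inner over $j$, and use $n^{k-j} n^{j-\ell} = n^{k-\ell}$ to arrive at
\begin{align*}
\mathbb{E}(Z_n^k) = n^k + \sum_{\ell=1}^k (-1)^{\ell-1} n^{k-\ell} a_{n,\ell} \sum_{j=\ell}^k {k \choose j}{j \choose \ell}(-1)^j.
\end{align*}

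Next I would collapse the inner sum by means of the subset-of-a-subset identity ${k \choose j}{j \choose \ell} = {k \choose \ell}{k-\ell \choose j-\ell}$ and the reindexing $m=j-\ell$, so that the inner sum equals ${k \choose \ell}(-1)^\ell \sum_{m=0}^{k-\ell}{k-\ell \choose m}(-1)^m = {k \choose \ell}(-1)^\ell (1-1)^{k-\ell}$, which vanishes for $\ell < k$ and equals $(-1)^k$ for $\ell = k$. Thus only the term $\ell = k$ survives, contributing $(-1)^{k-1} a_{n,k}\cdot(-1)^k = -a_{n,k}$, whence $\mathbb{E}(Z_n^k) = n^k - a_{n,k}$, as claimed.

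I do not anticipate a genuine obstacle: the computation is a routine manipulation of binomial sums, and the only points requiring attention are bookkeeping the $j=0$ (i.e.\ $e_{n,0}=1$) term and the fact that \eqref{general recurrence for e_{n,k}} has been established for every $k \ge 1$, so it may be inserted freely into the expansion. As an alternative one could argue by induction on $n$ starting from the distribution \eqref{probability function define by Simon, Crippa, and Collenberg} and the recurrence \eqref{relation of a_{n,k}} for $a_{n,k}$, but the binomial route above is shorter since it reuses \eqref{general recurrence for e_{n,k}} rather than re-deriving it.
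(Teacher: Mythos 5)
Your proposal is correct and is essentially identical to the paper's own proof: the same binomial expansion of $\mathbb{E}\left((n-X_n)^k\right)$, substitution of \eqref{general recurrence for e_{n,k}}, interchange of the finite sums, and collapse of the inner sum via ${k \choose j}{j \choose \ell}={k \choose \ell}{k-\ell \choose j-\ell}$ and $(1-1)^{k-\ell}$, leaving only the $\ell=k$ term. The sign bookkeeping in your final step also matches, so nothing further is needed.
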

\begin{proof}
As $Z_n= n-X_n$, so using Binomial expansion
\begin{align}\label{centeredmoment1}
\mathbb{E}(Z_n^k) &=\mathbb{E}\left((n-X_n)^k\right) = \sum_{j=0}^k \binom{k}{j} n^{k-j}(-1)^j\mathbb{E}( X_n^j) \nonumber \\
&= n^k + \sum_{j=1}^k \binom{k}{j} n^{k-j}(-1)^je_{n,j}.        
\end{align}
Now we use \eqref{general recurrence for e_{n,k}} in the right side of the above expression to get
\begin{align*}
\mathbb{E}(Z_n^k) &= n^k + \sum_{j=1}^k \binom{k}{j} n^{k-j}(-1)^j \sum_{\ell=1}^j \binom{j}{\ell}(-1)^{\ell-1} n^{j-\ell}a_{n,\ell} \nonumber \\
&=n^k+ \sum_{\ell=1}^k (-1)^{\ell-1} n^{k-\ell}a_{n,\ell}\sum_{j=\ell}^k \binom{k}{j}\binom{j}{\ell} (-1)^j \nonumber \\
&=n^k - \sum_{\ell=1}^k {k \choose \ell} n^{k-\ell}a_{n,\ell}\sum_{j=\ell}^k  {k-\ell \choose j-\ell} (-1)^{j-\ell}.
\end{align*}
By using binomial theorem, the inner sum gives value $1$ if $k=\ell$ and $0$ otherwise. Therefore, we get 
\begin{align*}
\mathbb{E}(n-X_n)^k = n^k-a_{n,k}.
\end{align*}
Now we are ready to prove \eqref{Conjecture in term of Y_n},  which is equivalent to Theorem \ref{Conjecture}.  To prove \eqref{Conjecture in term of Y_n}, we further derive an equivalent statement by multiplying both sides of \eqref{Conjecture in term of Y_n} by $\frac{z^t}{t!}$ and then summing over $t$ to obtain
\begin{align}\label{cgfy}
\lim_{n\to\infty} \sum_{t= 1}^\infty \kappa_t(Z_n) \frac{z^t}{t!} &=  \sum_{t= 1}^\infty K_{t}(q) \frac{z^t}{t!}.
\end{align}
Now we expand the sum on the right hand side to see
\begin{align}\label{cgf1}
\sum_{t= 1}^\infty \frac{z^t}{t!} K_{t}(q) &=  \sum_{t= 1}^\infty \frac{z^t}{t!} \sum_{k=1}^\infty \frac{k^{t-1}q^k}{1-q^k} \nonumber \\
&=\sum_{k=1}^\infty\frac{q^k}{1-q^k} \frac{1}{k}\sum_{t=1}^\infty \frac{(zk)^t}{t!} \nonumber \\
&=\sum_{k=1}^\infty\frac{q^k}{1-q^k} \frac{e^{kz}-1}{k} \nonumber \\
&=  \sum_{k=1}^\infty\frac{e^{kz}-1}{k} \sum_{\ell=1}^\infty  q^{k\ell} \nonumber \\
&=\sum_{\ell=1}^\infty \sum_{k=1}^\infty\frac{e^{kz}q^{k\ell}}{k}-\sum_{\ell=1}^\infty \sum_{k=1}^\infty\frac{q^{k\ell}}{k} \nonumber \\
&=-\sum_{\ell=1}^\infty \log(1-e^{z}q^\ell)+ \sum_{\ell=1}^\infty  \log(1-q^\ell) \nonumber \\
&=\log \prod_{\ell=1}^\infty \frac{(1-q^\ell)}{(1-e^{z}q^\ell)} = \log \frac{(q)_\infty}{(e^{z}q)_\infty}.
\end{align}
Combining \eqref{cgfy} and \eqref{cgf1},  we get 
\begin{align*}
\lim_{n\to\infty} \sum_{t= 1}^\infty \kappa_t(Z_n) \frac{z^t}{t!} = \log \frac{(q)_\infty}{(e^{z}q)_\infty}.
\end{align*}
Hence,  we only need to show the identity
\begin{align}\label{mgf2}
\lim_{n\to\infty}\exp\left(  \sum_{t= 1}^\infty \kappa_t(Z_n) \frac{z^t}{t!}\right) = \frac{(q)_\infty}{(e^{z}q)_\infty}.
\end{align}
From the definition of the cumulant generating function \eqref{cumulant generating function}, we know that
\begin{align*}
\lim_{n\to\infty} \mathbb{E}\left(e^{Z_n z}\right) = \lim_{n\to\infty}\exp\left(  \sum_{t= 1}^\infty \kappa_t(Z_n) \frac{z^t}{t!}\right).
\end{align*}
Thus, to prove \eqref{mgf2}, we have to show that 
\begin{align*}
\lim_{n\to\infty} \mathbb{E}\left(e^{Z_n z}\right) = \frac{(q)_\infty}{(e^{z}q)_\infty}.
\end{align*}
Now we start with
\begin{align}\label{mgf3}
\lim_{n\to\infty} \mathbb{E}\left(e^{Z_n z}\right)&= \lim_{n\to\infty} \mathbb{E}\left( e^{(n-X_n) z}\right) \nonumber \\
&=\lim_{n\to\infty} \sum_{k=0}^\infty\frac{z^k}{k!} \mathbb{E}\left((n-X_n)^k\right) \nonumber\\
&= 1 +  \sum_{k=1}^\infty \frac{z^k}{k!} \lim_{n\to\infty} (n^k-a_{n,k}) \nonumber \\
&= 1 +  \sum_{k=1}^\infty \frac{z^k}{k!} \sum_{\ell=1}^\infty \ell^kq^\ell(q^{\ell+1})_\infty,
\end{align}
where we used Lemma \ref{lem1} in the penultimate step and Corollary \ref{a new term of limit to our generalization} in the last step. 
Further, we use
\begin{align*}
\sum_{\ell=0}^\infty q^\ell(q^{\ell+1})_\infty =1
\end{align*}
in \eqref{mgf3} to obtain
{\allowdisplaybreaks
\begin{align*}
\lim_{n\to\infty} \mathbb{E}\left(e^{Z_n z}\right)&= \sum_{\ell=0}^\infty q^\ell(q^{\ell+1})_\infty +  \sum_{k=1}^\infty \frac{z^k}{k!} \sum_{\ell=1}^\infty \ell^kq^\ell(q^{\ell+1})_\infty \nonumber \\
 &=  (q)_\infty + \sum_{\ell=1}^\infty q^\ell(q^{\ell+1})_\infty +  \sum_{k=1}^\infty \frac{z^k}{k!} \sum_{\ell=1}^\infty \ell^kq^\ell(q^{\ell+1})_\infty \nonumber \\
&=  (q)_\infty +  \sum_{k=0}^\infty \frac{z^k}{k!} \sum_{\ell=1}^\infty \ell^kq^\ell(q^{\ell+1})_\infty \nonumber \\
&=  (q)_\infty + \sum_{\ell=1}^\infty q^\ell(q^{\ell+1})_\infty  \sum_{k=0}^\infty \frac{z^k}{k!} \ell^k \nonumber \\
&=(q)_\infty + \sum_{\ell=1}^\infty e^{\ell z} q^\ell(q^{\ell+1})_\infty \nonumber \\
&=\sum_{\ell=0}^\infty e^{\ell z} q^\ell(q^{\ell+1})_\infty \nonumber \\
&= \frac{(q)_\infty}{(e^z q)_\infty},
\end{align*}}
where in the last step, we used \cite[Equation 2.3]{uchimura87} with $x=e^z$. This completes the proof of Theorem \ref{Conjecture}.

\end{proof}

\section{concluding remarks}\label{remarks}

The work of Simon-Crippa-Collenberg gives an additional representation for Uchimura's identity \eqref{uchimura's identity},  namely,  
\begin{align}\label{4-term of Uchimura}
\lim_{n\rightarrow \infty}(n-\mathbb{E}(\gamma^*_n(1)) = \sum_{n=1}^\infty nq^n(q^{n+1})_\infty=\sum_{n=1}^\infty \frac{(-1)^{n-1} q^{ \frac{n(n+1)}{2}}}{(1-q^n)(q)_n}=\sum_{n=1}^\infty d(n) q^n.  
\end{align}
Further,  Andrews,  Crippa and Simon showed that 
\begin{align*}
\lim_{n\rightarrow \infty} \mathrm{Var}(\gamma^*_n(1))=\sum_{n=1}^\infty \sigma(n)q^n= K_2(q).  
\end{align*}
Uchimura generalized his own identity in the following way: 
\begin{align*}
\sum_{n=1}^\infty n^k q^n(q^{n+1})_\infty = Y_k(K_1,K_2,\dots,K_k).  
\end{align*}
Now combining the identity \eqref{Ramanujan-type for Uchimura} and Corollary \ref{a new term of limit to our generalization},  we see that 
\begin{align}\label{limit of Uchimura}
\lim_{n\rightarrow \infty}\left(n^k- a_{n,k}(q)\right) = \sum_{n=1}^\infty n^k q^n(q^{n+1})_\infty = \sum_{n=1}^\infty \frac{(-1)^{n-1} q^{ {n+1 \choose 2 } } A_k(q^n)}{(1-q^n)^k (q)_n}= Y_k(K_1,K_2,\dots,K_k),
\end{align}
where $a_{n,k}$ is defined as in \eqref{definition a_n,k}.  Furthermore,  we obtained a limiting expression for Dilcher's identity,   namely,  for $k \in \mathbb{N}$,  
\begin{align}\label{4 term for Dilcher}
\lim_{n \rightarrow \infty} \left(  {n \choose k} - a_{n, k}(q)  \right) = \sum_{n=k}^\infty {n\choose k} q^n(q^{n+1})_\infty & =\sum_{n=1}^\infty \frac{(-1)^{n-1}q^{{n+k \choose 2}-{k\choose 2}}}{(1-q^n)^k (q)_n} \nonumber \\ 
& =\sum_{j_1=1}^\infty\frac{q^{j_1}}{1-q^{j_1}} \cdots \sum_{j_k=1}^{j_{k-1}}\frac{q^{j_k}}{1-q^{j_k}},
\end{align}
where $a_{n,k}$ is defined in \eqref{a_n,k for Dilcher}.
As observed in \eqref{4-term of Uchimura}-\eqref{4 term for Dilcher},  we discovered a limit expression for Uchimura's identity \eqref{Uchimura's gen} and Dilcher's  identity \eqref{dilcher 1},  which are generalizations of \eqref{uchimura's identity}.  Thus,  it would be fascinating to find limit expressions for all the generalizations of Uchimura's identity \eqref{uchimura's identity} in the framework of Uchimura-Ramanujan-divisor type identities studied in \cite{ABEM24}.   

Furthermore,  we also generalized the work of Simon-Crippa-Collenberg and  Andrews-Crippa-Simon by showing that the limit of the $t$-th cumulant is nothing but the generalized divisor function $K_t(q)$,  i.e.,  for any $t \geq 1$,  
\begin{align*}
\lim_{n\rightarrow \infty}\kappa_t(Z_n)&= K_t(q),
\end{align*}
where $Z_n= n-  \gamma_n^*(1)$.   In the proof of Theorem \ref{theorem for kappa_3, kappa_4 and kappa_5},  we evaluated the limit of the third,  fourth and fifth cumulants using the limit form \eqref{Limit form of Bell polynomials}  of Uchimura's identity and Dilcher's identity \cite[p.~85,  Equations (2.1)--(2.3)]{dilcher}.  It would be interesting to utilize this approach to give another proof of the general case of the $t$-th cumulant.


\section*{Acknowledgements}  The first author wishes to thank University Grant Commission (UGC), India, for providing Ph.D. scholarship.  The second author wants to thank the Department of Mathematics, Pt. Chiranji Lal Sharma Government College, Karnal, for providing research facilities. The third author is thankful to his institution BITS Pilani for providing the New Faculty Seed Grant NFSG/PIL/2024/P3797. The fourth author is grateful to the Anusandhan National Research Foundation (ANRF),  India, for giving the Core Research Grant CRG/2023/002122 and MATRICS Grant MTR/2022/000545.   
\\

\section*{Conflict of Interest}
The authors declare that they do not have any conflict of interest.


\begin{thebibliography}{00}

\bibitem{ABEM} A.~Agarwal, S.~C.~Bhoria, P.~Eyyunni, and B.~Maji, \emph{Bressoud-Subbarao type weighted partition identities for a generalized divisor function}, Ann. Comb., \textbf{28} (2024), 555--574. 

\bibitem{ABEM24} A.~Agarwal, S.~C.~Bhoria, P.~Eyyunni, and B.~Maji, \emph{A divisor generating $q$-series identity and its applications to probability theory and random graphs}, submitted for publication, arXiv:2405.01877.  
 
 
 
 
 
 
 
 
 
 
 
 
  
  
 
%
 
\bibitem{andrewssiam1997}
G.~E.~Andrews, D.~Crippa and K.~Simon, \emph{$q$-series arising from the study of random graphs}, SIAM J.~Discrete~Math.,~\textbf{10}, No. 1 (1997), 41--56.
 
 
 
 
 
 %
 
 
 
 
 
 
 \bibitem{bresub}
 D.~Bressoud and M.~Subbarao, \emph{On Uchimura's connection between partitions and the number of divisors}, Canad.~Math.~Bull.,~\textbf{27} (1984), 143--145.
 
 
 \bibitem{comtet}
 L.~Comtet, \emph{Advanced Combinatorics},  Reidel, Dordrecht, 1974.
 
 
\bibitem{dilcher} 
K.~Dilcher, \emph{Some $q$-series identities related to divisor functions}, Discrete Math., {\bf 145} (1995), 83--93.  
 
 


 

 
 
 
 
 
%

 

 
 \bibitem{GK21} R.~Gupta and R.~Kumar,  \emph{On some $q$-series identities related to a generalized divisor function and their implications},  Discrete Math., {\bf 344} (2021),  112559.  
 
 
 
 
 
 
 
 \bibitem{kluyver} J.~C.~Kluyver, Vraagstuk XXXVII (Solution by S.C. van Veen), Wiskundige Opgaven (1919), 92--93.
 
 
 
 
 \bibitem{ramanujanoriginalnotebook2}
 S.~Ramanujan, \emph{Notebooks of Srinivasa Ramanujan, Vol. II}, Tata Institute of Fundamental Research, Bombay, 1957, 393 pp.


 
  

\bibitem{SCC1993}
K.~Simon, D.~Crippa and F.~Collenberg,  \emph{On the distribution of the transitive closure in a random acyclic digraph},  in: Algorithms - ESA ’93,  in: Lecture Notes in Computer Science, vol. 726, Springer-Verlag, Berlin, 1993, pp. 345--356.

 
\bibitem{uchimura81} K.~Uchimura, \emph{An identity for the divisor generating function arising from sorting theory}, J. Combin.
 Theory Ser. A {\bf 31} (1981), 131--135. 
 
\bibitem{uchimura87} K.~Uchimura,  \emph{Divisor generating functions and insertion into a heap},  Discrete Appl. Math.  {\bf 18} (1987),  no. 1, 73--81.   
 
 
 
 
 \end{thebibliography}
\end{document}